\title{Transformation laws for generalized Dedekind sums associated to Fuchsian groups}
\author{Claire Burrin, Jay Jorgenson,
Cormac O'Sullivan and Lejla Smajlovi\'c\footnote{{\it Date:} April 25, 2019.
\newline \indent \ \ \
  {\it 2010 Mathematics Subject Classification: 11F20, 30F35.}
  \newline \indent \ \ \
The second and third authors were partially supported by PSC-CUNY grants.}
}
\date{}
\renewcommand\footnotemark{}
\begin{document}

\maketitle

\def\s#1#2{\langle \,#1 , #2 \,\rangle}

\def\H{{\mathbb H}}
\def\F{{\mathfrak F}}
\def\C{{\mathbb C}}
\def\R{{\mathbb R}}
\def\Z{{\mathbb Z}}
\def\Q{{\mathbb Q}}
\def\N{{\mathbb N}}
\def\st{{\mathbb S}}
\def\D{{\mathbb D}}
\def\B{{\mathbb B}}
\def\G{{\Gamma}}
\def\GH{{\G \backslash \H}}
\def\g{{\gamma}}
\def\L{{\Lambda}}
\def\ee{{\varepsilon}}
\def\K{{\mathcal K}}
\def\Re{\text{\rm Re}}
\def\Im{\text{\rm Im}}
\def\SL{\text{\rm SL}}
\def\GL{\text{\rm GL}}
\def\PSL{\text{\rm PSL}}
\def\sgn{\text{\rm sgn}}
\def\tr{\text{\rm tr}}
\def\F{\mathcal{F}}
\def\ca{{\mathfrak a}}
\def\cb{{\mathfrak b}}
\def\cc{{\mathfrak c}}
\def\cd{{\mathfrak d}}
\def\ci{{\infty}}

\def\sa{{\sigma_\mathfrak a}}
\def\sb{{\sigma_\mathfrak b}}
\def\sc{{\sigma_\mathfrak c}}
\def\sd{{\sigma_\mathfrak d}}
\def\si{{\sigma_\infty}}

\def\se{{\sigma_\eta}}
\def\sz{{\sigma_{z_0}}}

\def\sai{{\sigma^{-1}_\mathfrak a}}
\def\sbi{{\sigma^{-1}_\mathfrak b}}
\def\sci{{\sigma^{-1}_\mathfrak c}}
\def\sdi{{\sigma^{-1}_\mathfrak d}}
\def\sii{{\sigma^{-1}_\infty}}
\def\PSL{\text{\rm PSL}}
\def\vol{\text{\rm vol}}
\def\I{\text{\rm Im}}

\newcommand{\m}[4]{\begin{pmatrix}#1&#2\\#3&#4\end{pmatrix}}
\newcommand{\n}{\frac1{\sqrt{37}}}
\newcommand{\ms}[4]{\left(\smallmatrix #1&#2\\#3&#4\endsmallmatrix\right)}
\newcommand{\ns}{\textstyle\frac1{\sqrt{37}}}
\newcommand{\e}{\eqref}


\newtheorem{theorem}{Theorem}[section]
\newtheorem{lemma}[theorem]{Lemma}
\newtheorem{prop}[theorem]{Proposition}
\newtheorem{cor}[theorem]{Corollary}
\newtheorem{conj}[theorem]{Conjecture}
\newtheorem{remark}{Remark}
\newtheorem{defs}[theorem]{Definition}
\renewcommand{\labelenumi}{(\roman{enumi})}

\numberwithin{equation}{section}

\let\originalleft\left
\let\originalright\right
\renewcommand{\left}{\mathopen{}\mathclose\bgroup\originalleft}
\renewcommand{\right}{\aftergroup\egroup\originalright}

\bibliographystyle{plain}

\begin{abstract}\noindent
We establish transformation laws for generalized Dedekind sums associated to the Kronecker limit function of non-holomorphic  Eisenstein series
and their higher-order variants. These results apply to general Fuchsian groups of the first kind, and examples are provided in the cases
of the Hecke triangle groups, the Hecke congruence groups $\Gamma_0(N)$, and the non-congruence arithmetic groups $\Gamma_0(N)^+$.
\end{abstract}

\section{Introduction} 

\subsection{The reciprocity law for the Dedekind sum}

The Dedekind eta function is defined by the infinite product
\begin{equation}\label{etad}
\eta(z):= e^{\pi i z/12} \prod_{n=1}^{\infty}\left(1-e^{2\pi i n z}\right)
\end{equation}
with $z$ in the upper half plane $\H$.  When $(\smallmatrix a & b \\ c & d \endsmallmatrix ) \in \SL(2, \Z)$
it can be shown that 
\begin{equation}\label{log_eta}
\log \eta\left(\frac{az+b}{cz+d}\right) = \log \eta(z) + \frac{1}{2}\log(cz+d) + \pi i S(a,b,c,d),
\end{equation}
where the real numbers $S(a,b,c,d)$  are independent of $z$.  Indeed, Dedekind obtained the evaluation that
\begin{equation}\label{Dsum}
S(a,b,c,d)= \frac{a+d}{12c}-\frac{c}{|c|}\left( \frac{1}{4} + s(d,|c|)\right) \qquad \text{for} \qquad c\neq 0
\end{equation}
 and
 \begin{equation}\label{Dsum2}
S(a,b,0,d)= \frac{b}{12d}+\frac{d/|d|-1}{4}.
\end{equation}
The term $s(d,|c|)$ in \eqref{Dsum} is a {\em Dedekind sum} which is defined as
\begin{equation}\label{dede}
    s(d,c):= \sum_{m=0}^{c-1} \biggl(\biggl( \frac{dm}{c}\biggr)\biggr) \biggl(\biggl(  \frac{m}{c}\biggr)\biggr) \quad \text{for} \quad
    \left(\left( x\right)\right) := \begin{cases} x - \lfloor x\rfloor - 1/2 & \text{if} \quad x \in \R, x\not\in \Z\\
    0 & \text{if} \quad x \in \Z
    \end{cases}
\end{equation}
for $d,c$ coprime and  $c\geq 1$; see \cite[Chap. IX]{La95}. It is immediate that the periodicity relation
\begin{equation}\label{perio}
s(d+c,c)=s(d,c)
\end{equation}
holds.  Dedekind showed that (\ref{log_eta}) implies the reciprocity law
\begin{equation}\label{recip}
s(c,d)+s(d,c)=\frac{1}{12}\left(\frac{c}{d}+\frac{d}{c} + \frac{1}{cd}\right)-\frac{1}{4}
\end{equation}
for $c$ and $d$  relatively prime positive integers.
Several other proofs of the identity (\ref{recip}) are given in \cite{RG72}.

\subsection{Generalizations of Dedekind sums}

There are various directions in which  authors have obtained generalizations of Dedekind sums, including
sums of Bernoulli polynomials, cotangent sums, and rational functions evaluated at roots of unity.  Many of these
investigations take the evaluation (\ref{dede}) as a starting point.  Alternatively, one can begin with (\ref{log_eta}),
meaning that one defines $S$ as coming from the root of unity that appears when considering the transformation
property of the logarithm of a modular form, or, more generally, the holomorphic section of a power of the canonical bundle
twisted by a flat line bundle.

The Dedekind eta function (\ref{etad})  arises naturally in the constant term of
a Laurent expansion for the non-holomorphic Eisenstein series for $\SL(2, \Z)$. In other words, $\eta(z)$ appears in terms of a Kronecker limit function;
see, for example, \cite{DIT} for a discussion of several intricate connections between Kronecker limit functions and number theory.
Goldstein in \cite{Gn1,Gn2,Gn3} takes this point of view and studies Kronecker limit functions of Eisenstein series associated to general Fuchsian groups
$\G$.  In particular, Goldstein develops the notion of generalized Dedekind sums associated to $\G$.   A deeper study in this direction is given in
\cite{Bu} and \cite{Bu2} where the author also studies distribution results of general Dedekind sums, which were previously only known in a few instances.

We will use the notation $S_\G$ for the generalization of $S$ from \e{log_eta} to other groups $\G$, and call this object a {\em modular Dedekind symbol}.
We will use $H_\G$ for the corresponding {\em generalized Dedekind sum} extending \e{dede}. The periodicity \e{perio} may be traced back to the parabolic
generator $(\smallmatrix 1 & 1 \\ 0 & 1 \endsmallmatrix )$ of $\SL(2, \Z)$. Similarly, the reciprocity \e{recip} comes from the elliptic generator
$(\smallmatrix 0 & -1 \\ 1 & 0 \endsmallmatrix )$. The subgroups $\G$ of $\SL(2, \R)$ which we consider will be finitely generated by parabolic, elliptic and
hyperbolic elements. For each parabolic generator we will obtain a {\em periodicity relation} which generalizes \e{perio}, while each elliptic generator
will provide a {\em reciprocity relation} as in \e{recip}. There will also be {\em hyperbolic periodicity relations} from the hyperbolic generators,
and our focus of interest is to describe all of these transformations.

The above ideas also extend naturally to a type of non-holomorphic Eisenstein series that includes modular symbols in its definition;
see \e{mod} for the definition of the modular symbol pairing.  The simplest such series, which contains one modular symbol, was studied in \cite{OS00}, for example,
and its Kronecker limit was obtained in \cite{JO'S05}. The authors in \cite{JO'SS1} defined and studied a new modular Dedekind symbol, denoted by $S^*_\G$, obtained from the Kronecker limit of Eisenstein series constructed with the square of the absolute value of a modular symbol.
These types of series are not automorphic in the usual sense but rather transform with a higher-order condition as described in \e{higher}. Recently in \cite{PR}
these higher-order Eisenstein series played a key role in the proof of conjectures by Mazur,
Rubin and Stein on the statistics of modular symbols. These series  have been generalized further in \cite{CHO}.

\subsection{Definitions and main results} \label{dfg}
 Much of the following background material is found in the early chapters of \cite{Iw2, S}, for example, but
 is included here for the convenience of the reader.

 The group
$\SL(2,\R)$ acts by linear fractional transformations on
$\H$, and this action may be extended to $\H \cup \R \cup\{\ci\}$.
Let $\Gamma$ be a discrete subgroup of $\SL(2,\R)$ where the quotient $\G\backslash \H$ has   finite hyperbolic volume $V_\G$.
This means that $\G$ is a  Fuchsian group of the first kind. We assume that $\G$ contains at least one parabolic element fixing a cusp in
$\R \cup\{\ci\}$. If $\ca$ is a cusp of $\G$ then we label the subgroup of parabolic elements fixing  $\ca$ as
$\G_\ca$. The image of $\G_\ca$
 under the map $\SL(2,\R) \to \SL(2,\R)/\pm I$ is isomorphic to $\Z$. 
Denoting this image by $\overline{\G}_\ca$, there exists
a scaling matrix $\sa \in \SL(2,\R)$ such that
\begin{equation}\label{scal}
  \sa \infty = \ca \quad \text{and} \quad \sai\overline{\G}_\ca \sa  = \left\{\left. \pm \left(\smallmatrix 1 & \ell \\ 0 & 1 \endsmallmatrix \right) \
   \right|\  \ell\in \Z\right\}.
\end{equation}
The matrix $\sa$ is unique up to multiplication on the right by $\pm\left(\smallmatrix 1
& t \\ 0 & 1 \endsmallmatrix\right)$ for any  $t\in \R$.
By conjugating $\G$, if necessary, we may assume
that $\G$ has a cusp at infinity,  which is denoted  by $\infty$, with
scaling matrix $\sigma_{\infty}$ equal to the identity matrix $I$. Therefore $\overline{\G}_\ci$ is generated by
$\pm P_{\infty}$ for $P_{\infty} := \left(\smallmatrix 1 & 1 \\ 0 & 1 \endsmallmatrix \right)$.
We also assume for simplicity that
$P_{\infty}$ is in $\G$. It follows that if $-I \in \G$ then $\G_\ci$ is generated by $P_\ci$ and $-I$.   If
$-I \not\in \G$ then $\G_\ci$ is just generated by  $P_\ci$. Note that we are excluding the case where $-I$ and $P_\ci$ are both not
in $\G$ and  $-P_\ci$ generates $\G_\ci$. On the other hand, if $\gamma\in\Gamma$ has bottom left entry equal to $0$, then $\gamma\in\Gamma_\infty$. This is clear for $\G = \SL(2,\Z)$ and true more generally for any discrete subgroup of $\SL(2,\R)$; see Proposition 1.17 of \cite{S}.

The modular Dedekind symbols $S_\G$ and $S^{*}_\G$, associated to $\G$, are defined in
\cite[Sect. 1.3]{JO'SS1}. The definitions and relevant details are reviewed in Section 2.  For now, recall that $S^*_\G$ requires modular
symbols for its construction, and these  depend on the choice of a nonzero weight $2$ holomorphic cusp form for $\G$.
The space of such forms, $S_2(\G)$, has dimension equal to the genus $g$ of $\GH$. Therefore, $S^*_\G = S^*_{\G,f}$
depends on a group  $\Gamma$ of positive genus and a nonzero $f \in S_2(\G)$.

Let $\theta_\G:=S^*_\G-S_\G$ and set $ A_\G := V_{\Gamma}/4\pi$. It follows from the Gauss--Bonnet formula \eqref{gbon}
that $A_\G$ is rational. For $r\in \R$, define $\sgn(r) := r/|r|$ if $r\neq 0$ and $\sgn(0):=0$. We fix $\G$ and will usually
omit the use of the symbol $\G$ from the notation in what follows.

\begin{defs}[Generalized Dedekind sums]
Let $\gamma =\left(\smallmatrix a & b \\ c & d  \endsmallmatrix \right) \in \Gamma$. If $c \neq 0$ then define
$$ H(\gamma) :=  - S(\gamma)+ A \frac{a +d }{c }  -\frac{1}4 \sgn(c) \quad \text{and}\quad  H^{*}(\gamma) :=  \theta(\gamma) +  A \frac{a +d }{c } .$$
If $c=0$, set
$$
H(\gamma) :=  - S(\gamma)+ A \frac{b}{d } +\frac{1}4(\sgn(d)-1) \quad \text{and}\quad  H^{*}(\gamma) :=  \theta(\gamma) +  A \frac{b}{d } .
$$
\end{defs}

When $\Gamma$ is the modular group $\mathrm{SL}(2,\Z)$, the constant $A$ is $1/12$.  In this case, the above definition coincides with
\eqref{Dsum} and one gets that
$H(\g)= \frac{c}{|c|} s(d,|c|)$ for $c\neq 0$ and $H(\g)= 0$ for $c = 0$.  Therefore,  $H(\g)$ may be viewed as a \textit{signed} generalization of the Dedekind sum,
as introduced in \cite{Bu}.

\begin{prop}\label{period}
The functions $H$ and $H^*$ are left $\G_\ci$-invariant. Therefore, $H$ and $H^*$ depend only on the bottom rows of their arguments,
thus allowing us to write
\begin{equation}\label{notnx}
  H(d,c):=H(\gamma)
\quad \textrm{and}
\quad
H^*(d,c):=H^*(\gamma)
\end{equation}
for $\gamma=(\begin{smallmatrix} *&*\\c&d\end{smallmatrix}) \in \G$.
In this form one has the periodicity relations
\begin{equation}\label{periodx}
  H(d+c,c)=H(d,c)
\quad \textrm{and}
\quad
H^*(d+c,c)=H^*(d,c).
\end{equation}
\end{prop}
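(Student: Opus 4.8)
The plan is to reduce both invariance statements to the behavior of the symbols $S$, $S^*$ and $\theta=S^*-S$ under the two generators of $\G_\ci$, namely $P_\ci$ and, when $-I\in\G$, $-I$. Using the definitions recalled in Section~2 and the cocycle relation satisfied by each symbol, I would first record the transformation laws that carry the argument: for every $\gamma=\ms{a}{b}{c}{d}\in\G$,
\begin{equation*}
S(P_\ci\gamma)=S(\gamma P_\ci)=S(\gamma)+A,\qquad S^*(P_\ci\gamma)=S^*(\gamma P_\ci)=S^*(\gamma),
\end{equation*}
so that $\theta(P_\ci\gamma)=\theta(\gamma P_\ci)=\theta(\gamma)-A$. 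The shift for $S$ comes from its cocycle relation together with the value $S(P_\ci)=A$, the correction term vanishing because $P_\ci$ has zero lower-left entry and hence trivial automorphy factor; the invariance of $S^*$ follows in the same way from the vanishing of the modular symbol attached to the parabolic generator $P_\ci$. When $-I\in\G$ I would likewise record $S(-\gamma)=S(\gamma)+\tfrac12\sgn(c)$ and $S^*(-\gamma)=S^*(\gamma)+\tfrac12\sgn(c)$, so that $\theta(-\gamma)=\theta(\gamma)$; these reflect the $\pm I$ ambiguity in the branch of $\log(cz+d)$ common to both series, with $\sgn(c)$ replaced by $-\sgn(d)$ when $c=0$.

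Granting these laws, each invariance is a short substitution into the defining formulas for $H$ and $H^*$. Fix $c\neq0$. Left multiplication by $P_\ci$ fixes the bottom row and sends $a\mapsto a+c$, so the term $A(a+d)/c$ increases by exactly $A$ while $\sgn(c)$ is unchanged; in $H$ this extra $A$ is cancelled by $-S(P_\ci\gamma)=-S(\gamma)-A$, and in $H^*$ by $\theta(P_\ci\gamma)=\theta(\gamma)-A$, giving $H(P_\ci\gamma)=H(\gamma)$ and $H^*(P_\ci\gamma)=H^*(\gamma)$. For $-I$ the term $A(a+d)/c$ is unchanged while $-\tfrac14\sgn(c)$ reverses sign, and this reversal is absorbed exactly by $-S(-\gamma)=-S(\gamma)-\tfrac12\sgn(c)$; for $H^*$, which carries no sign term, one uses $\theta(-\gamma)=\theta(\gamma)$ directly. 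The degenerate case $c=0$, where $\gamma\in\G_\ci$ by the criterion that an element of $\G$ with zero lower-left entry lies in $\G_\ci$, runs identically after the $(a+d)/c$ and $-\tfrac14\sgn(c)$ terms are replaced by the $b/d$ and $\tfrac14(\sgn(d)-1)$ terms. Since $P_\ci$ and $-I$ generate $\G_\ci$, this proves the left $\G_\ci$-invariance of $H$ and $H^*$.

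To pass to bottom rows, suppose $\gamma_1,\gamma_2\in\G$ share a bottom row $(c,d)$. Then $\gamma_1\gamma_2^{-1}$ has zero lower-left entry and lower-right entry $\det\gamma_2=1$, so by the same criterion it lies in $\G_\ci$ and hence equals $P_\ci^{\,n}$ for some $n\in\Z$; left $\G_\ci$-invariance then gives $H(\gamma_1)=H(\gamma_2)$ and $H^*(\gamma_1)=H^*(\gamma_2)$, legitimizing the notation $H(d,c)$ and $H^*(d,c)$. The periodicity relations are the right-multiplication counterpart: $\gamma P_\ci$ has bottom row $(c,c+d)$, so $H(d+c,c)=H(\gamma P_\ci)$, and substituting $S(\gamma P_\ci)=S(\gamma)+A$ (respectively $\theta(\gamma P_\ci)=\theta(\gamma)-A$) into the defining formulas lets the $A(a+d)/c$ term absorb the shift just as before, yielding $H(\gamma P_\ci)=H(\gamma)=H(d,c)$ and the same for $H^*$.

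The only genuine input is the block of transformation laws stated at the outset, and I expect the work to sit entirely in Section~2: fixing the special value $S(P_\ci)=A$ from the Kronecker limit expansion, and showing that the vanishing parabolic modular symbol forces $S^*(P_\ci\gamma)=S^*(\gamma)$. Once these are available, the Proposition reduces to verifying that the correction terms built into the definitions of $H$ and $H^*$ are exactly the ones that cancel the cocycle shifts.
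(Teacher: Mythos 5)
Your proof is correct and follows essentially the same route as the paper: both arguments reduce left $\G_\ci$-invariance to the generators $P_\ci$ and $-I$ using the cocycle relations for $S$, $S^*$, $\theta$ together with the vanishing phase factors and the special values $S(P_\ci)=A$, $S^*(P_\ci)=0$, $\theta(P_\ci)=-A$, then identify matrices sharing a bottom row via $\gamma\eta^{-1}=P_\ci^k$, and finally deduce periodicity from right multiplication by $P_\ci$. The paper merely packages the generator computations as standalone lemmas (treating general powers $P_\ci^k$ and proving two-sided $\G_\ci$-invariance before the Proposition), but the substance is identical to yours.
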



The values taken by $H$ for certain groups $\G$ have been studied by many authors. On the other hand, the values of $H^*$ are just beginning to be understood.
 It follows from Theorem 4 of \cite{JO'SS1}
that, in the case of genus one Hecke congruence groups, $H^*$ is always rational. These are exactly the groups $\Gamma_0(N)$ for which
$N\in\{11,14,15,17,19,20,21,24,27,32,36,49\}$. See Section \ref{gam11} for a detailed description of $H^*$ when $\G=\G_0(11)$, for example. 

The next theorem gives a density result for the genus one Hecke congruence groups mentioned above.
For $r\in \R$, let $\{r\}$ denote its fractional part.

\begin{theorem}
Let $\Gamma$ be a genus one Hecke congruence group. Then there exists $t=t(\Gamma)\in\N$ with the following property.
For all $ d,c $ such that $ 0<d\leq c\leq X$ and $
(\begin{smallmatrix} *&*\\c & d \end{smallmatrix}) \in \G$,  the points
$$
\left( \{ d/c\}, \{t \cdot H^*(d,c)\} \right)
$$
become equidistributed in the square $\R/\Z\times\R/\Z$ as $X\to\infty$.
\end{theorem}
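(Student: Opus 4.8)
The plan is to verify Weyl's equidistribution criterion on the torus $\R/\Z\times\R/\Z$. Writing $e(x):=e^{2\pi i x}$ and using that $e(k\{y\})=e(ky)$ for $k\in\Z$, it suffices to show that for every $(m,n)\in\Z^2\setminus\{(0,0)\}$ the exponential sum
$$
W(X):=\sum_{\substack{0<d\le c\le X\\ (\begin{smallmatrix}*&*\\ c&d\end{smallmatrix})\in\G}} e\left(m\frac{d}{c}+n\,t\,H^*(d,c)\right)
$$
satisfies $W(X)=o(N(X))$, where $N(X)$ counts the admissible pairs. For $\G=\G_0(N)$ the bottom row $(c,d)$ of an element runs exactly over the pairs with $N\mid c$ and $\gcd(c,d)=1$ (as recalled in Section \ref{dfg}), so a standard count gives $N(X)\asymp X^2$; the target is thus power-saving cancellation in $W(X)$.

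The arithmetic heart is to rewrite $t\,H^*(d,c)$ modulo $1$. By the Definition we have $H^*(\gamma)=\theta(\gamma)+A(a+d)/c$, and by Theorem 4 of \cite{JO'SS1} the genus one hypothesis forces $H^*$ to be rational. I would choose $t=t(\G)\in\N$ to be a common denominator that clears $A=V_\G/(4\pi)$ (rational by Gauss--Bonnet), so that $\kappa:=tA\in\Z$, and that simultaneously collapses the higher-order part $t\,\theta(d,c)$ modulo $1$ to a correction $\lambda(d,c)$ of bounded denominator depending on $d$ only through a fixed modulus $q=q(\G)$ (possibly $\lambda\equiv 0$). Since $ad\equiv 1\pmod c$ for $(\begin{smallmatrix}a&b\\ c&d\end{smallmatrix})\in\G$, I may replace $a$ by the inverse $\bar d$ of $d$ modulo $c$; as $\kappa\in\Z$ the integer part drops and
$$
n\,t\,H^*(d,c)\equiv \kappa n\,\frac{\bar d+d}{c}+n\lambda(d,c)\pmod 1 .
$$
Hence the full phase in $W(X)$ becomes
$$
m\frac{d}{c}+n\,t\,H^*(d,c)\equiv \frac{(m+n\kappa)d+n\kappa\,\bar d}{c}+n\lambda(d,c)\pmod 1 ,
$$
so with $\alpha=m+n\kappa$ and $\beta=n\kappa$ the inner sum over $d$ modulo $c$ (coprime to $c$) is a Kloosterman-type sum twisted by the $q$-periodic weight $e(n\lambda)$.

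Because $0<d\le c$ is exactly one full period, the inner sum is complete; splitting $d$ into residue classes modulo the fixed modulus $q$ reduces it to honest (possibly progression-restricted) Kloosterman sums. If $n=0$ then $\beta=0$ and each inner sum is a Ramanujan sum, bounded by a divisor function, whence $W(X)\ll_\varepsilon X^{1+\varepsilon}=o(X^2)$. If $n\ne 0$ then $\beta=n\kappa\ne 0$ (here $\kappa=tA\ne0$ since $V_\G>0$), the sums are genuinely Kloosterman, and Weil's bound $\ll_\varepsilon c^{1/2+\varepsilon}$ applies for all but the finitely many $c$ dividing $n\kappa$. Summing $O(c^{1/2+\varepsilon})$ over $c\le X$ with $N\mid c$, together with the constantly many residue classes modulo $q$, yields $W(X)\ll_\varepsilon X^{3/2+\varepsilon}=o(X^2)$, which is the required cancellation.

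I expect the decisive step to be the reduction carried out in the second paragraph: proving that a single integer $t=t(\G)$ simultaneously clears $A$ and collapses $t\,\theta(d,c)$ modulo $1$ into a bounded-denominator, $q$-periodic correction $\lambda$. This is a uniform refinement of the bare rationality statement of \cite{JO'SS1}, requiring an exact description of the denominators of the higher-order symbol $\theta=S^*-S$ on each genus one group $\G_0(N)$, uniform in $(d,c)$, which I would extract from the explicit Kronecker limit formulas reviewed in Section 2. Once that is in place the analytic input is classical; the only remaining care is to confirm that $\beta=n\kappa\ne 0$ whenever $n\ne0$, so that the Kloosterman sums do not all degenerate, and to absorb the finitely many exceptional moduli, both of which are harmless.
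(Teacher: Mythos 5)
Your skeleton --- Weyl's criterion, choosing one integer $t=t(\G)$ to clear denominators, and reducing the Weyl sums to Kloosterman-type sums over the bottom rows of $\G_0(N)$ --- is the same as the paper's. But the step you yourself flag as decisive is a genuine gap, and it is exactly the step the paper proves. You hypothesize that some $t$ collapses $t\,\theta$ modulo $1$ into a bounded-denominator correction $\lambda(d,c)$ that is periodic in $d$ modulo some fixed $q$, and you propose to extract this from the explicit Kronecker limit formulas. No such congruence structure is available, and the object is not even well-posed as you write it: $\theta$ is not a function of the bottom row alone (left multiplication by $P_\ci^k$ shifts $\theta$ by $-kA$), so ``$\theta(d,c)$'' and a $q$-periodicity in $d$ have no meaning. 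The paper's mechanism is group-theoretic, not congruence-theoretic. By Lemma \ref{prop-theta}, $\theta(\g\tau)-\theta(\g)-\theta(\tau)=\tfrac12\theta([\g,\tau])$, so by Corollary \ref{iteration} the value $\theta(\g)$ of \emph{any} $\g\in\G$, written as a word in a fixed finite generating set, is a $\tfrac12\Z$-linear combination of the values of $\theta$ on the generators and their pairwise commutators. Since those finitely many values are rational (\cite[Sect. 6.4]{JO'SS1}), taking $L$ to be the least common multiple of their denominators gives $2L\,\theta(\g)\in\Z$ \emph{uniformly in} $\g$; Gauss--Bonnet \eqref{gbon} gives $A\in\tfrac1{2L'}\Z$ with $L'$ the lcm of the elliptic orders, and $t=4LL'$ then makes $t\theta(\g)$ and $tA$ integers simultaneously. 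Hence your correction is identically zero modulo $1$, the phase reduces exactly to $e\bigl(ntA\,\tfrac ac+(m+ntA)\tfrac dc\bigr)$ with integer frequencies, and no progression-restricted twists ever arise. Without this commutator identity (or some substitute giving uniform denominators over the whole group), your reduction to Kloosterman sums does not get off the ground.

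For the analytic endgame your route differs from the paper's and would be acceptable once the gap above is filled: for $\G_0(N)$ the bottom rows with $c\neq 0$ are precisely the pairs with $N\mid c$ and $\gcd(c,d)=1$, the inner sums are then complete classical Kloosterman sums $S(ntA,\,m+ntA;c)$, and Weil's bound (with the $\gcd$ factor absorbing your ``exceptional moduli'') gives $W(X)\ll_\ee X^{3/2+\ee}=o(X^2)$; note also that $(ntA,\,m+ntA)\neq(0,0)$ whenever $(m,n)\neq(0,0)$ since $A>0$, which is the nondegeneracy you wanted. The paper instead invokes Good's spectral theorem \cite[Thm. 4]{Go} for sums of Selberg--Kloosterman sums, $\sum_{c\le X}S(M,N,c)=\delta_0 X^2/(\pi V_\G)+O(X^{2-\kappa})$, which has two advantages: the case $M=N=0$ simultaneously furnishes the asymptotic count of admissible pairs needed in the denominator of Weyl's criterion (rather than your ``standard count'' $N(X)\asymp X^2$), and the argument makes no use of the congruence description of $\G$, so it is the form of the estimate that generalizes beyond Hecke congruence groups.
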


For $\Gamma=\SL(2,\Z)$, Myerson \cite{My} proved a more general version of this result for $H$, with $t\in\N$ replaced by any nonzero $t\in\R$.
Also, for any cofinite Fuchsian group $\Gamma$ and any nonzero $t\in\R$, it is shown in \cite{Bu} that $\{t H(d,c)\}$ becomes equidistributed in $\R/\Z$.

We now turn our attention to the transformation laws of the generalized Dedekind sums $H$ and $H^*$. In what follows, we  use the notation $
\left(\begin{matrix} a_{\gamma} & b_{\gamma}  \\ c_{\gamma} & d_{\gamma}  \end{matrix} \right)
$ for the entries of matrix $\g$.

\begin{theorem}\label{tr-thm}
Let $\g$, $\tau \in \Gamma$ with  $c_{\g}c_{\tau}c_{\g\tau}\neq 0$. Then
\begin{align}\label{hhx1}
  H(\g\tau)-H(\g)-H(\tau) & = -A\left(\frac{c_\gamma}{c_\tau c_{\gamma\tau}} + \frac{c_\tau}{c_\gamma c_{\gamma\tau}} +\frac{c_{\gamma\tau}}{c_\gamma c_\tau}\right) + \frac 14 \sgn(c_\gamma c_\tau c_{\gamma\tau}),  \\
  H^*(\g\tau)-H^*(\g)-H^*(\tau) & = -A\left(\frac{c_\gamma}{c_\tau c_{\gamma\tau}} + \frac{c_\tau}{c_\gamma c_{\gamma\tau}} +\frac{c_{\gamma\tau}}{c_\gamma c_\tau}\right) +
  \frac{V_f}{2\pi}\mathrm{Im}\left(\langle\gamma,f\rangle\overline{\langle \tau,f\rangle}\right) \label{hhx2}
\end{align}
where $V_f := V_\G/(16 \pi^2 \| f\|^2)$, $\|f\|$ is the Petersson norm of $f$, and $\langle \cdot , f\rangle$ is the modular symbol pairing as defined in (\ref{mod}).
\end{theorem}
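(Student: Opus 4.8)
The plan is to reduce both identities to three ingredients: a purely algebraic trace identity, the composition law (cocycle) for $S$, and the composition law for $\theta=S^*-S$. Since the hypothesis $c_\gamma c_\tau c_{\gamma\tau}\neq 0$ keeps all three of $\gamma,\tau,\gamma\tau$ in the $c\neq 0$ branch of the Definition, throughout the argument I may use $H(\gamma)=-S(\gamma)+A\frac{a_\gamma+d_\gamma}{c_\gamma}-\frac14\sgn(c_\gamma)$ and $H^*(\gamma)=\theta(\gamma)+A\frac{a_\gamma+d_\gamma}{c_\gamma}$. Forming the second differences, each transformation defect splits as an $S$- (respectively $\theta$-) cocycle, plus $A$ times the defect of $\frac{a+d}{c}$, plus, for $H$ only, a defect of $\frac14\sgn(c)$. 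A direct computation of the product $\gamma\tau$ using only $\det\gamma=\det\tau=1$ gives the identity
\begin{equation*}
\frac{a_{\gamma\tau}+d_{\gamma\tau}}{c_{\gamma\tau}}-\frac{a_\gamma+d_\gamma}{c_\gamma}-\frac{a_\tau+d_\tau}{c_\tau}=-\left(\frac{c_\gamma}{c_\tau c_{\gamma\tau}}+\frac{c_\tau}{c_\gamma c_{\gamma\tau}}+\frac{c_{\gamma\tau}}{c_\gamma c_\tau}\right),
\end{equation*}
which already accounts for the entire $-A(\cdots)$ summand in both \eqref{hhx1} and \eqref{hhx2}.

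It remains to identify the two cocycles. For $S$ I would take the transformation law recalled in Section 2, apply it first to $\tau$ at $z$ and then to $\gamma$ at $\tau z$, and compare with a single application to $\gamma\tau$ at $z$. The values of the holomorphic Kronecker limit function cancel, and only the branch discrepancy of $\log(c_\bullet z+d_\bullet)$ survives, governed by the automorphy factor $j(\gamma,z):=c_\gamma z+d_\gamma$ and its cocycle $j(\gamma\tau,z)=j(\gamma,\tau z)\,j(\tau,z)$. Determining the argument of these factors as $z$ ranges over $\H$ yields the relation
\begin{equation*}
S(\gamma\tau)-S(\gamma)-S(\tau)=\tfrac14\big(\sgn(c_\gamma)+\sgn(c_\tau)-\sgn(c_{\gamma\tau})\big)-\tfrac14\sgn(c_\gamma c_\tau c_{\gamma\tau}),
\end{equation*}
which is identical in form to the $\SL(2,\Z)$ case. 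Substituting this together with the trace identity into $H(\gamma\tau)-H(\gamma)-H(\tau)$, the three single signs $\sgn(c_\gamma),\sgn(c_\tau),\sgn(c_{\gamma\tau})$ cancel against the defect of $\frac14\sgn(c)$, leaving precisely $\frac14\sgn(c_\gamma c_\tau c_{\gamma\tau})$; this proves \eqref{hhx1}.

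For \eqref{hhx2} the same trace identity disposes of the $-A(\cdots)$ term, so everything reduces to the cocycle of $\theta$. Here I would begin from the higher-order transformation law \eqref{higher} for the Eisenstein series built from $|\langle\gamma,f\rangle|^2$. Expanding the squared modulus by means of the additive modular-symbol relation $\langle\gamma\tau,f\rangle=\langle\gamma,f\rangle+\langle\tau,f\rangle$ expresses this series, after the change of variable $z\mapsto\tau z$, in terms of itself, the modular-symbol-twisted Eisenstein series, and the ordinary Eisenstein series. Passing to the Kronecker limit at $s=1$ converts this into the transformation law of $S^*$, hence of $\theta$, under a single element; iterating it for $\gamma\tau$ and invoking the additive relation once more, the additive contributions telescope and only the quadratic cross-term remains as the second difference of $\theta$. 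Tracking its phase identifies this cross-term as $\frac{V_f}{2\pi}\mathrm{Im}(\langle\gamma,f\rangle\overline{\langle\tau,f\rangle})$, the constant $V_f=V_\Gamma/(16\pi^2\|f\|^2)$ being produced by the residue and Petersson-norm normalization of the twisted series. Combined with the trace term, this gives \eqref{hhx2}.

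The hard part will be this last step. It requires controlling the Laurent expansions at $s=1$ of the ordinary, the once-twisted, and the $|\cdot|^2$-twisted Eisenstein series simultaneously, and the higher-order series need not have a $z$-independent residue, so the pole structure must be handled with care. The most delicate point is to show that the symmetric real contribution $\mathrm{Re}(\langle\gamma,f\rangle\overline{\langle\tau,f\rangle})$ is absorbed into $\theta(\gamma)+\theta(\tau)$, so that only the antisymmetric imaginary part remains, and to verify that the Kronecker limit of the modular-symbol-twisted series delivers exactly the constant $V_f$ with the Petersson norm.
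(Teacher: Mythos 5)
Your reduction is exactly the paper's proof: since $c_\gamma c_\tau c_{\gamma\tau}\neq 0$, all three elements fall in the $c\neq 0$ branch of Definition 1.1, the second difference splits into a cocycle term plus $A$ times the trace defect (plus the $\tfrac14\sgn(c)$ defect for $H$), and your algebraic trace identity is precisely Lemma \ref{Lem}. Your treatment of \eqref{hhx1} is complete and correct: deriving the $S$-cocycle from the transformation law \eqref{etatran} is exactly how the phase factor $\omega(\gamma,\tau)$ of \eqref{w1} arises (this is Theorem \ref{sa} part i)), and your explicit sign formula for it is Petersson's evaluation \eqref{phs3}; the cancellation of the single signs against the $\tfrac14\sgn(c)$ defect then goes through as you say. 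The caveat concerns \eqref{hhx2}. The ``hard part'' you flag --- establishing $\theta(\gamma\tau)-\theta(\gamma)-\theta(\tau)=\tfrac{V_f}{2\pi}\Im\big(\langle\gamma,f\rangle\overline{\langle\tau,f\rangle}\big)$ from the Kronecker limit of the series $E^{1,1}_\infty(z,s;f)$ --- is not something this theorem requires you to prove: it is Corollary \ref{thec} part i), equivalently Theorem \ref{sa2} part i), which the paper quotes as a preliminary from Theorem 2 of \cite{JO'SS1}. Citing it closes your argument immediately, and with that citation your proof coincides with the paper's. As written, however, your re-derivation of this cocycle is only a sketch: you correctly name the obstacles (the order-$(m+1)$ pole of the twisted series, isolating the antisymmetric cross term, identifying the constant $V_f$ via the residue and Petersson normalization) but do not resolve them, and resolving them is the substantial analytic content of \cite{JO'SS1}, not a routine verification. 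So the proposal is correct and essentially identical to the paper's proof provided the $\theta$-cocycle is invoked as a known result; if it is instead treated as something to be proven inside this argument, that step remains a genuine gap.
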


The identity \eqref{hhx1} first appeared in \cite[Sect. 2.4]{Bu2}. By writing $\g_1=\g$, $\g_2=\tau$ and $\g_3=(\g\tau)^{-1}$, the above transformation laws may be given the following elegant formulation, first observed by Dieter in \cite{Di} for the classical Dedekind sums (\ref{dede}).

\begin{cor}\label{3term}
Let $\gamma_1=(\begin{smallmatrix} *&*\\c_1&*\end{smallmatrix}),\gamma_2=(\begin{smallmatrix} *&*\\c_2&*\end{smallmatrix}),\gamma_3=(\begin{smallmatrix} *&*\\c_3&*\end{smallmatrix})\in\Gamma$ be such that $c_1 c_2 c_3\neq0$ and $\gamma_1\gamma_2\gamma_3=I$. Then
\begin{align}\label{hjk}
  H(\gamma_1)+H(\gamma_2)+H(\gamma_3) & = -A\left(\frac{c_1}{c_2 c_3}+\frac{c_2}{c_1 c_3}+\frac{c_3}{c_1 c_2}\right) + \frac{1}{4} \sgn(c_1 c_2 c_3), \\
  H^*(\gamma_1)+H^*(\gamma_2)+H^*(\gamma_3) & = -A\left(\frac{c_1}{c_2 c_3}+\frac{c_2}{c_1 c_3}+\frac{c_3}{c_1 c_2}\right) -\frac{V_f}{2\pi}\mathrm{Im}\left(\langle\gamma_1,f\rangle\overline{\langle \gamma_2,f\rangle}\right). \label{di2}
\end{align}
\end{cor}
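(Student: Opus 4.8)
The plan is to specialize Theorem~\ref{tr-thm} through the substitution indicated just above. Given $\gamma_1,\gamma_2,\gamma_3\in\Gamma$ with $c_1c_2c_3\neq0$ and $\gamma_1\gamma_2\gamma_3=I$, I put $\gamma:=\gamma_1$ and $\tau:=\gamma_2$, so that $\gamma_3=(\gamma_1\gamma_2)^{-1}=(\gamma\tau)^{-1}$. Since $\det(\gamma\tau)=1$, inversion negates the lower-left entry, whence $c_3=-c_{\gamma\tau}$; together with $c_\gamma=c_1\neq0$ and $c_\tau=c_2\neq0$ this gives $c_{\gamma\tau}=-c_3\neq0$, so the hypothesis $c_\gamma c_\tau c_{\gamma\tau}\neq0$ of Theorem~\ref{tr-thm} is met.

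The one genuine ingredient is the behaviour of $H$ and $H^*$ under inversion, namely
\[
H(\gamma^{-1})=-H(\gamma),\qquad H^*(\gamma^{-1})=-H^*(\gamma)\qquad(c_\gamma\neq0).
\]
Passing from $\gamma$ to $\gamma^{-1}$ leaves the trace unchanged and negates the lower-left entry, so in the defining formulas for $H$ and $H^*$ the explicit terms $A(a_\gamma+d_\gamma)/c_\gamma$ and $\tfrac14\sgn(c_\gamma)$ already reverse sign; the two inversion laws therefore reduce to $S(\gamma^{-1})=-S(\gamma)$ and $\theta(\gamma^{-1})=-\theta(\gamma)$. These in turn express the oddness of the modular Dedekind symbols, which I expect to read off from the cocycle structure of the transformation laws defining $S_\Gamma$ and $S^*_\Gamma$ in Section~2 (the modular-symbol part of $\theta$ being odd because $\langle\gamma^{-1},f\rangle=-\langle\gamma,f\rangle$). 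As a consistency check, in the classical case $S(\gamma^{-1})=-S(\gamma)$ is precisely the inversion symmetry $s(\bar d,|c|)=s(d,|c|)$ of the Dedekind sum, valid because $a_\gamma\equiv d_\gamma^{-1}\pmod{|c_\gamma|}$. The step requiring real care is controlling the branch of the logarithm when deriving $S(\gamma^{-1})=-S(\gamma)$ from the Section~2 transformation law, since the automorphy-factor cocycle contributes an a priori nonzero integer discrepancy that must be shown not to obstruct oddness; this is the only genuine obstacle, everything else being sign bookkeeping.

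Granting these inversion laws, the corollary is immediate. From $H(\gamma_3)=H((\gamma\tau)^{-1})=-H(\gamma\tau)$ I get
\[
H(\gamma_1)+H(\gamma_2)+H(\gamma_3)=-\bigl(H(\gamma\tau)-H(\gamma)-H(\tau)\bigr),
\]
and substituting \eqref{hhx1} and then $c_{\gamma\tau}=-c_3$, $c_\gamma=c_1$, $c_\tau=c_2$ converts each ratio $c_i/(c_jc_{\gamma\tau})$ into $-c_i/(c_jc_3)$ and sends $\sgn(c_\gamma c_\tau c_{\gamma\tau})$ to $-\sgn(c_1c_2c_3)$; the two sign reversals produce exactly \eqref{hjk}. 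Running the identical computation on \eqref{hhx2}, now with $H^*(\gamma_3)=-H^*(\gamma\tau)$ and the term $\tfrac{V_f}{2\pi}\mathrm{Im}\bigl(\langle\gamma,f\rangle\overline{\langle\tau,f\rangle}\bigr)$ relabelled via $\gamma=\gamma_1$, $\tau=\gamma_2$ and carried through the overall minus sign, yields \eqref{di2}.
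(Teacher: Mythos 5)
Your proposal follows exactly the paper's route: specialize Theorem \ref{tr-thm} with $\gamma=\gamma_1$, $\tau=\gamma_2$, $\gamma_3=(\gamma\tau)^{-1}$, invoke the oddness $H(\gamma^{-1})=-H(\gamma)$, $H^*(\gamma^{-1})=-H^*(\gamma)$, and track the single sign change $c_3=-c_{\gamma\tau}$. The final sign bookkeeping (each ratio picking up a minus sign, and $\sgn(c_\gamma c_\tau c_{\gamma\tau})=-\sgn(c_1c_2c_3)$) is correct and reproduces \eqref{hjk} and \eqref{di2}. In the paper the oddness is Lemma \ref{lem-inv}, proved beforehand and simply cited at this point.

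The one incomplete step is precisely the one you flag and then leave open: after correctly reducing oddness of $H$ to $S(\gamma^{-1})=-S(\gamma)$ for $c_\gamma\neq 0$, you say the integer cocycle discrepancy ``must be shown not to obstruct oddness'' without showing it. This is a genuine (if small) gap, and it closes in one line from the paper's machinery rather than from any delicate branch-of-logarithm analysis: by Theorem \ref{sa} parts i) and ii), $S(\gamma)+S(\gamma^{-1})=S(I)-\omega(\gamma,\gamma^{-1})=-\omega(\gamma,\gamma^{-1})$, and for $c_\gamma\neq0$ the sign vector attached to the pair $(\gamma,\gamma^{-1})$ is $(\pm1,\mp1,0)$, which falls into the ``otherwise'' case of \eqref{omg}, so $\omega(\gamma,\gamma^{-1})=0$. (The discrepancy is genuinely nonzero when $c_\gamma=0$ and $d_\gamma<0$, e.g.\ $S(-I)=-1/2\neq -S((-I)^{-1})$, so the restriction $c_\gamma\neq0$ — guaranteed here since $c_{\gamma\tau}=-c_3\neq0$ — is what saves the argument; the paper's Lemma \ref{lem-inv} handles the $c_\gamma=0$ case by separate bookkeeping with $\rho$.) Your treatment of $\theta$ is already complete, since $\langle\gamma^{-1},f\rangle=-\langle\gamma,f\rangle$ makes the imaginary-part term in Corollary \ref{thec} part i) vanish. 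With that phase-factor check inserted, your proof coincides with the paper's.
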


Equation \eqref{hjk} is Theorem 2 of \cite{Bu2}, with the signs on the right corrected.
These three-term relations also imply succinct two-term relations, though it is simpler to prove the following directly.

\begin{theorem} \label{2trm}
Let $\g$, $\tau \in \Gamma$ with  $c_{\g}c_{\tau}c_{\g\tau}\neq 0$. Then, using the notation of \e{notnx},
\begin{align}\label{recip2a}
  H(d_{\g},c_{\g}) - H(d_{\g\tau},c_{\g\tau})  &  =
A\left(\frac{d_{\g}}{c_{\g}} - \frac{d_{\g\tau}}{c_{\g\tau}} +
\frac{c_{\tau}}{c_{\g}c_{\g\tau}}\right) + S(\tau) +\frac{1}4\big(\sgn(c_\tau) - \sgn(c_\gamma c_\tau c_{\gamma\tau})\big), \\
   H^*(d_{\g},c_{\g}) - H^*(d_{\g\tau},c_{\g\tau})  &  =
A\left(\frac{d_{\g}}{c_{\g}} - \frac{d_{\g\tau}}{c_{\g\tau}} +
\frac{c_{\tau}}{c_{\g}c_{\g\tau}}\right) -\theta(\tau)-\frac{V_f}{2\pi}\mathrm{Im}\left(\langle\gamma,f\rangle\overline{\langle \tau,f\rangle}\right). \label{recip3}
\end{align}
\end{theorem}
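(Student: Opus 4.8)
The plan is to deduce the two-term laws from the three-term laws of Theorem \ref{tr-thm}, which already absorb the delicate branch-of-logarithm bookkeeping, and then to tidy up the remaining rational expressions using a single matrix relation. First I would record that, by Proposition \ref{period} and the notation \eqref{notnx}, the left-hand sides of \eqref{recip2a} and \eqref{recip3} are simply $H(\gamma)-H(\gamma\tau)$ and $H^*(\gamma)-H^*(\gamma\tau)$, since $\gamma$ and $\gamma\tau$ have bottom rows $(c_\gamma,d_\gamma)$ and $(c_{\gamma\tau},d_{\gamma\tau})$. The hypothesis $c_\gamma c_\tau c_{\gamma\tau}\neq 0$ guarantees that all three matrices fall under the $c\neq 0$ branch of the defining formulas for $H$ and $H^*$, so no case analysis is needed.

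Next I would solve \eqref{hhx1} for $H(\gamma)-H(\gamma\tau)$, obtaining
\begin{equation*}
H(\gamma)-H(\gamma\tau)=-H(\tau)+A\left(\frac{c_\gamma}{c_\tau c_{\gamma\tau}}+\frac{c_\tau}{c_\gamma c_{\gamma\tau}}+\frac{c_{\gamma\tau}}{c_\gamma c_\tau}\right)-\frac14\sgn(c_\gamma c_\tau c_{\gamma\tau}),
\end{equation*}
and then substitute the defining formula $H(\tau)=-S(\tau)+A(a_\tau+d_\tau)/c_\tau-\tfrac14\sgn(c_\tau)$. The summand $+S(\tau)$ and the sign contribution $\tfrac14\big(\sgn(c_\tau)-\sgn(c_\gamma c_\tau c_{\gamma\tau})\big)$ then appear exactly as in \eqref{recip2a}, while the middle term $A\,c_\tau/(c_\gamma c_{\gamma\tau})$ matches the term $A\,c_\tau/(c_\gamma c_{\gamma\tau})$ on the right of \eqref{recip2a}. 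The only thing left to verify is the purely algebraic identity
\begin{equation*}
-\frac{a_\tau+d_\tau}{c_\tau}+\frac{c_\gamma}{c_\tau c_{\gamma\tau}}+\frac{c_{\gamma\tau}}{c_\gamma c_\tau}=\frac{d_\gamma}{c_\gamma}-\frac{d_{\gamma\tau}}{c_{\gamma\tau}}.
\end{equation*}

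I would check this last identity by clearing denominators: after cancellation it collapses to $c_\gamma=d_\tau c_{\gamma\tau}-c_\tau d_{\gamma\tau}$, which is precisely the bottom-left entry of the relation $\gamma=(\gamma\tau)\tau^{-1}$, read off using $\tau^{-1}=\ms{d_\tau}{-b_\tau}{-c_\tau}{a_\tau}$ together with $\det\tau=1$. The law \eqref{recip3} for $H^*$ is then obtained in identical fashion from \eqref{hhx2}: one replaces $H(\tau)$ by $H^*(\tau)=\theta(\tau)+A(a_\tau+d_\tau)/c_\tau$, so $-S(\tau)$ becomes $-\theta(\tau)$, and the sign correction is replaced by the modular-symbol term $-\tfrac{V_f}{2\pi}\mathrm{Im}\big(\langle\gamma,f\rangle\overline{\langle\tau,f\rangle}\big)$; here there is no $\tfrac14\sgn$ contribution to track, and the same rational identity does the rest.

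I do not anticipate a serious obstacle along this route, precisely because Theorem \ref{tr-thm} has already performed the hard work; the lone nontrivial computation is the one-line reduction to $\gamma=(\gamma\tau)\tau^{-1}$. The alternative ``direct'' argument hinted at in the text would instead substitute the definitions of $H(\gamma)$ and $H(\gamma\tau)$ outright and invoke the two-term cocycle transformation of $S$ (and of $\theta$) coming straight from the Kronecker limit function reviewed in Section~2. In that approach the genuine difficulty shifts to tracking the $\tfrac14\sgn(\cdots)$ corrections produced by the branch ambiguity of $\log(c_\gamma z+d_\gamma)$ in the automorphy-factor cocycle, and the corresponding imaginary-part term for $H^*$. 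I would therefore keep the three-term route as the primary argument and fall back on the direct computation only if a proof independent of Theorem \ref{tr-thm} were wanted.
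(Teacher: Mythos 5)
Your proof is correct, but it follows a genuinely different route from the paper's. The paper proves Theorem \ref{2trm} directly: it expands $H(\gamma)-H(\gamma\tau)$ from Definition 1.1, applies the cocycle relation $S(\gamma\tau)=S(\gamma)+S(\tau)+\omega(\gamma,\tau)$ (Theorem \ref{sa} part i)) together with Petersson's formula \eqref{phs3} for the phase factor, and then invokes the \emph{second} identity of Lemma \ref{Lem}; the $H^*$ case is handled the same way with Corollary \ref{thec} part i), where no phase factors arise. You instead treat the three-term law of Theorem \ref{tr-thm} as a black box, substitute the defining formula for $H(\tau)$ (resp.\ $H^*(\tau)$), and reduce everything to one rational identity --- which, incidentally, is exactly the difference of the two identities stated in Lemma \ref{Lem}. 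The paper explicitly anticipates your route, remarking before the theorem that the three-term relations imply the two-term ones ``though it is simpler to prove the following directly.'' What your route buys is economy: the cocycle relations and the $\sgn$ bookkeeping of \eqref{phs3} are never touched again, since Theorem \ref{tr-thm} has absorbed them; what it costs is your new algebraic identity, whose verification is the same kind of computation the paper relegates to Lemma \ref{Lem}. The two arguments are therefore of comparable length, and both are valid (there is no circularity, since the paper proves Theorem \ref{tr-thm} independently of Theorem \ref{2trm}).

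One small imprecision in your write-up: after clearing denominators, your identity does \emph{not} collapse to the single scalar relation $c_\gamma=d_\tau c_{\gamma\tau}-c_\tau d_{\gamma\tau}$, because $d_\gamma$ also survives in the cleared expression. You need both bottom-row entries of $\gamma=(\gamma\tau)\tau^{-1}$, namely $c_\gamma=c_{\gamma\tau}d_\tau-d_{\gamma\tau}c_\tau$ and $d_\gamma=-c_{\gamma\tau}b_\tau+d_{\gamma\tau}a_\tau$, together with $\det\tau=1$ (equivalently, combine $c_\gamma=c_{\gamma\tau}d_\tau-d_{\gamma\tau}c_\tau$ with $c_{\gamma\tau}=c_\gamma a_\tau+d_\gamma c_\tau$). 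With either pair of substitutions the cleared expression vanishes identically, so this is a one-line repair, and the rest of your argument goes through as written.
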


If $\tau$ is elliptic or parabolic   then $S(\tau)$ in \eqref{recip2a} is not difficult to evaluate,
from which one obtains reciprocity and periodicity  relations for $H$. If $\tau$ in \eqref{recip3} is elliptic or parabolic,
and not fixing a cusp equivalent to $\infty \bmod \G$,  then we will see that only the first term
on the right can be nonzero, thus giving the simple law
\begin{equation}\label{recip2b}
  H^*(d_{\g},c_{\g}) - H^*(d_{\g\tau},c_{\g\tau})    =
A\left(\frac{d_{\g}}{c_{\g}} - \frac{d_{\g\tau}}{c_{\g\tau}} +
\frac{c_{\tau}}{c_{\g}c_{\g\tau}}\right).
\end{equation}


Among the various relations  obtained, we highlight here an interesting identity, involving the
classical Dedekind sum \eqref{dede} and the M\"obius function $\mu$, which we derive from generalized Dedekind sums
associated to the group $\G_{0}(N)$ for any integer $N\geq 2$.  Let $c$, $d$ and $k$ be positive
integers with $c$ and $d$ relatively prime and $N | c$. Then, as seen in \e{rty},
$$
\sum_{v\mid N} \frac{\mu(v)}{v}\left( s\left( d, \frac{v c}{N}\right) - s\left( d, \frac{v c}{N} + v k d\right) \right) =
\frac{N^2 k(d^2+1)}{12c(c+kNd)}  \prod_{p\mid N} \left(1-p^{-2} \right).
$$

The following material is divided into four sections.  Section 2  presents initial results needed for our analysis.  The proofs of the main results are given in Section 3.
Section 4 provides additional transformation formulas, and in Section 5 we give some examples of the theory
for specific groups.

Finally, we note that our definitions of $S$, $S^*$, $H$ and $H^*$ above are for the cusp at infinity. There
are analogous definitions for  arbitrary cusps and our formulas may be extended to cover these cases. For  details on the general parametrization of the
modular Dedekind symbols $S$ and $S^*$, see \cite[Sect. 5]{JO'SS1}. For the general parametrization of $H$, see \cite{Bu}.

\section{Preliminaries}

In this section, for the sake of completeness, we  recall results from 
\cite{Bu2}, \cite{Gn1} and \cite{JO'SS1} that will be required. Throughout, as described in Section \ref{dfg}, $\G$ is a discrete subgroup of $\SL(2,\R)$ with finite volume and at least one cusp, at $\ci$. It is assumed that the parabolic subgroup fixing $\ci$ is generated by $P_{\infty} := \left(\smallmatrix 1 & 1 \\ 0 & 1 \endsmallmatrix \right)$, and also $-I$ if $-I\in \G$.

\subsection{The phase factor}

For  any two matrices $M$, $N$ in $\SL (2,\R)$ define the {\em phase factor} $\omega(M,N)$ by
 \begin{equation}\label{w1}
\omega(M,N):=\bigl(-\log j(MN, z) + \log j(M, N z)+ \log j(N, z)\bigr)/(2\pi i).
\end{equation}
We are using the principal branch of the logarithm with argument in $(-\pi,\pi]$. The right side of (\ref{w1}) is independent of $z \in \H$ and may only take  values in the set $\{-1,0,1\}$.
This phase factor  is treated carefully by Petersson in \cite{Pet} where an explicit formula for evaluation of $\omega(M,N)$ is given
in terms of the bottom rows of $M,$ $N$ and $MN$.
Here, we will recall a slightly different formulation  from Proposition 3.2 of \cite{JO'SS1}  which will be used throughout this paper.

For all $M, N$ in $\mathrm{SL}(2,\R)$ set $\mathbf{c} =(\sgn ( c_M), \sgn ( c_N), \sgn ( c_{MN}))$. Then
\begin{equation}\label{omg}
\omega(M,N)=\begin{cases}
1 & \textrm{ if \ } \mathbf{c} = (1,1,-1)  \text{ or } (0,1,-1) \text{ or } (1,0,-1)\\
1 & \textrm{ if \ } \mathbf{c} = (0,0,0) \textrm{ and } \sgn( d_M)=\sgn( d_N )=-1 \\
-1 & \textrm{ if \ } \mathbf{c} = (-1,-1,1) \text{ or } (-1,-1,0) \\
0 & \textrm{ otherwise. }
\end{cases}
\end{equation}
Applying the above relations, it is straightforward to see that for $M\in \SL(2,\R)$ and $k \in \Z$
one has
\begin{align}\label{phs}
  \omega(P_\ci^k,M)=\omega(M,P_\ci^k) &=0, \\
  \omega(-P_\ci^k,M)=\omega(M,-P_\ci^k) &= \left\{
               \begin{array}{ll}
                 (1+\sgn(c_M))/2, & \text{   if $c_M\neq 0$}; \\
                 (1-\sgn(d_M))/2, & \text{  if $c_M = 0$.}
               \end{array}
             \right. \label{phs2}
\end{align}
The next formula of Petersson may also be verified easily. For $c_M c_N c_{MN} \neq 0$,
\begin{equation}\label{phs3}
  \omega(M,N)=\left( \sgn(c_M) + \sgn(c_N) -\sgn(c_{M N})-\sgn(c_M c_N c_{M N}) \right)/4.
\end{equation}

\subsection{The modular Dedekind symbol $S$}

The non-holomorphic Eisenstein series associated to $\G$ and the cusp at $\infty$ is defined for $z\in \H$ and $s\in \C$ with $\textrm{Re}(s) > 1$ by
$$
E_\infty(z,s) :=\sum_{\g \in \Gamma_\infty\backslash\G} \Im(\g z)^s.
$$
This  series admits a meromorphic continuation to all $s \in \C$; at $s=1$, the Laurent expansion has a first order pole, and the
Kronecker limit function is defined to be the next order term in the  expansion.  The Kronecker limit function can be shown to
have the form $\log \vert \textrm{Im}(z) \eta_{\Gamma,\infty}(z) \vert$ for some function $\eta_{\Gamma,\infty}(z)$ which is
non-vanishing on $\H$, locally holomorphic, and transforms as a weight $1/2$ modular form with respect to $\G$.  This modular
transformation includes the multiplier system $e^{\pi i S_{\Gamma, \ci}(\g)}$.  In other words, we have that
\begin{equation} \label{etatran}
\log\eta_{\Gamma,\infty}(\g z)=  \log\eta_{\Gamma,\infty}(z) + \frac{1}{2} \log j(\g, z) + \pi i S_{\Gamma,\infty}(\g)\quad \text{for all} \quad \g \in \G
\end{equation}
where $j(\g,z):=c_\g z+d_\g$.
In the case when $\Gamma=\mathrm{SL}(2,\Z)$, the function $\eta_{\Gamma,\infty}(z)$ is the classical Dedekind eta function \eqref{etad}, and \eqref{etatran} coincides with \eqref{log_eta}. In the sequel, we consider the group $\Gamma$ and the cusp at $\infty$ as fixed, so we will write $S(\g)=S_{\Gamma,\infty}(\g)$ and call this function the \textit{modular Dedekind symbol} for $\G$. This construction was initiated in \cite{Gn1} and \cite{Gn2}.

Define an automorphism $\iota$ of $\SL(2,\R)$ as
\begin{equation*} \label{iota def}
\left(\begin{matrix} a&b\\c&d\end{matrix}
                            \right) \stackrel{\iota}{\longrightarrow}
\left(\begin{matrix} a&-b\\-c&d\end{matrix}
                            \right).
\end{equation*}
Also set
\begin{align}\label{rho}
\rho(\gamma) :=\left\{
               \begin{array}{ll}
                 1, & \text{   if } c_\g=0 \text{   and   } d_\g<0; \\
                 0, & \text{  otherwise.}
               \end{array}
             \right.
\end{align}
The properties we will need for the symbol $S$ are assembled in the next theorem.

\begin{theorem} \label{sa}
For all $\gamma$ and $\tau$ in $\Gamma$ the following assertions are true.
\begin{itemize}
\item [i)] We have that $S(\g \tau)=S(\g)+S(\tau) + \omega(\g , \tau).$
\item [ii)] We have $S(I)=0$, and if $-I \in\Gamma$, then $S(-I)=-1/2$.
\item [iii)] If $\iota(\Gamma)=\Gamma$, then $S(\iota(\g))=-S(\g)-\rho(\g)$, where
$\rho$ is defined in \eqref{rho}.
\item[iv)] Suppose $\g$ is  parabolic, fixing the cusp $\ca$. Choose a scaling matrix $\sigma_\ca$ satisfying \eqref{scal}. Then    $\g_\ca:=\sigma_\ca ^{-1}\g \sigma_\ca= \pm(\smallmatrix 1
& h \\ 0 & 1 \endsmallmatrix )$ for a well-defined $h\in \Z$ and
\begin{equation}\label{comp}
  S(\g)=  \delta(\infty,\ca) \cdot h A   -\log(j(\g_\ca,i))/2\pi i-  \omega(\sai,\g)+ \omega(\g_\ca,\sai).
\end{equation}
 Here $\delta(\infty,\ca)=1$ in the case $\ca\equiv \infty \bmod \G$ and $\delta(\infty,\ca)=0$ otherwise.
\item [v)] Let $\g$ be  elliptic, fixing $z_0 \in \H$ and with $\g^r=I$ for $r>0$. Then $$S(\g) = -\frac 1{2\pi i} \log j( \g, z_0) =  -\frac{1}{r} \sum_{k=1}^{r-1} \omega(\g^k, \g ).$$
\end{itemize}
\end{theorem}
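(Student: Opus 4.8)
The plan is to derive all five assertions from the single transformation law \eqref{etatran}, treating (i), (ii) and (v) as essentially formal consequences, handling (iii) by a reflection symmetry, and reserving the bulk of the work for the parabolic case (iv).

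\emph{Parts (i), (ii) and (v).} For (i) I would compute $\log\eta_{\Gamma,\infty}(\g\tau z)$ in two ways: applying \eqref{etatran} directly to the pair $(\g\tau,z)$, and applying it first to $(\tau,z)$ and then to $(\g,\tau z)$ to expand $\log\eta_{\Gamma,\infty}(\g(\tau z))$. Subtracting, the terms $\log\eta_{\Gamma,\infty}(z)$ cancel and one is left with
\begin{equation*}
\pi i\big(S(\g\tau)-S(\g)-S(\tau)\big)=\tfrac12\big(\log j(\g,\tau z)+\log j(\tau,z)-\log j(\g\tau,z)\big),
\end{equation*}
whose right-hand side is exactly $\pi i\,\omega(\g,\tau)$ by the definition \eqref{w1}. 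For (ii) I would substitute $\g=I$ into \eqref{etatran} (so $Iz=z$, $j(I,z)=1$) to read off $S(I)=0$, and substitute $\g=-I$ (so $(-I)z=z$, $j(-I,z)=-1$, $\log(-1)=\pi i$) to get $0=\tfrac12\pi i+\pi i S(-I)$, i.e. $S(-I)=-1/2$. For (v) I would evaluate \eqref{etatran} at the elliptic fixed point $z_0$: since $\g z_0=z_0$ and $\eta_{\Gamma,\infty}(z_0)\neq0$, the terms $\log\eta_{\Gamma,\infty}(z_0)$ cancel and leave $0=\tfrac12\log j(\g,z_0)+\pi i S(\g)$, the first equality. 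The second follows by iterating (i): telescoping $S(\g^{k})=S(\g^{k-1})+S(\g)+\omega(\g^{k-1},\g)$ gives $S(\g^{r})=rS(\g)+\sum_{k=1}^{r-1}\omega(\g^{k},\g)$, and $\g^{r}=I$ forces $S(\g^{r})=0$ by (ii).

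\emph{Part (iii).} Here I would use the anti-holomorphic reflection $z\mapsto-\bar z$, which intertwines the actions of $\g$ and $\iota(\g)$ through the identity $-\overline{\g z}=\iota(\g)(-\bar z)$, preserves $\Im$, and fixes the cusp at $\ci$ (note $\iota(P_\ci)=P_\ci^{-1}$). Since $\iota(\Gamma)=\Gamma$, the Eisenstein series satisfies $E_\ci(-\bar z,s)=E_\ci(z,s)$, hence so does its Kronecker limit function; as $\Im(-\bar z)=\Im(z)$ this forces $|\eta_{\Gamma,\infty}(-\bar z)|=|\eta_{\Gamma,\infty}(z)|$, so the locally holomorphic non-vanishing functions $\overline{\eta_{\Gamma,\infty}(-\bar z)}$ and $\eta_{\Gamma,\infty}(z)$ differ by a unimodular constant. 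Conjugating \eqref{etatran} written for $\iota(\g)$ at the point $-\bar z$ and using $j(\iota(\g),-\bar z)=\overline{j(\g,z)}$, I would compare with \eqref{etatran} for $\g$ to obtain $S(\iota(\g))=-S(\g)$. The one subtlety is that conjugation commutes with the principal logarithm only off the negative reals, and $j(\g,z)$ is a negative real precisely when $c_\g=0$ and $d_\g<0$; there $\overline{\log w}$ and $\log\bar w$ differ by $2\pi i$, which is exactly what produces the correction $-\rho(\g)$.

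\emph{Part (iv), the main obstacle.} Writing $\g=\sa\g_\ca\sai$ with $\g_\ca=\pm P_\ci^{h}$, I would introduce the transported logarithm $G(z):=\log\eta_{\Gamma,\infty}(\sa z)-\tfrac12\log j(\sa,z)$, representing $\log$-eta at the cusp $\ca$. Applying \eqref{etatran} to $\g$ at the point $\sa z$, together with the multiplier cocycle $j(\g,\sa z)=j(\sa,\g_\ca z)\,j(\g_\ca,z)/j(\sa,z)$ (valid since $\g\sa=\sa\g_\ca$), one checks that $G(z+h)-G(z)$ is independent of $z$ and equals $\pi i S(\g)+\tfrac12\log j(\g_\ca,z)$ up to an explicit $2\pi i$-branch integer $N$. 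The remaining input — and where essentially all the difficulty lies — is to evaluate this constant by letting $\Im(z)\to\infty$: the factor $\tfrac12\log j(\sa,z+h)-\tfrac12\log j(\sa,z)$ tends to $0$, while the cuspidal Kronecker-limit expansion shows $\log\eta_{\Gamma,\infty}(\sa(z+h))-\log\eta_{\Gamma,\infty}(\sa z)\to\pi i\,\delta(\infty,\ca)\,hA$, the linear growth rate being nonzero exactly when $\ca\equiv\infty\bmod\Gamma$. This pins the constant as $\pi i\,\delta(\infty,\ca)hA$, so that $S(\g)=\delta(\infty,\ca)hA-\log j(\g_\ca,i)/2\pi i-N$, the term $-\log j(\g_\ca,i)/2\pi i$ recording the sign $\g_\ca=\pm P_\ci^{h}$. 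Finally I would recast the branch integer as $N=\omega(\sai,\g)-\omega(\g_\ca,\sai)$ using the explicit evaluations \eqref{omg} and \eqref{phs3} of the phase factor. The genuine obstacle throughout is the careful tracking of logarithm branches and the appeal to the precise behavior of $\eta_{\Gamma,\infty}$ near an arbitrary cusp.
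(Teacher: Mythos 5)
The paper itself contains no proof of Theorem~\ref{sa}: its ``proof'' is a citation of Theorem 1, Eq.\ (3.9) and Propositions 6.3, 5.6 and 5.8 of \cite{JO'SS1}. Your attempt is therefore a genuine reconstruction, and most of it is correct: parts i), ii) and v) follow from \eqref{etatran} exactly as you describe, and your reflection argument for iii) is sound --- the intertwining $-\overline{\g z}=\iota(\g)(-\bar z)$ together with $\iota(\G_\ci)=\G_\ci$ gives $E_\ci(-\bar z,s)=E_\ci(z,s)$, hence $|\eta_{\G,\ci}(-\bar z)|=|\eta_{\G,\ci}(z)|$, and the failure of $\overline{\log w}=\log \bar w$ exactly when $w$ is a negative real (i.e.\ $c_\g=0$, $d_\g<0$) produces the term $-\rho(\g)$ with the correct sign.

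The gap is in iv), at precisely the step you lean on. Your reduction is fine: with principal logarithms, $C:=G(z+h)-G(z)$ is constant and equals $\pi i S(\g)+\tfrac12\log j(\g_\ca,i)+\pi i\big(\omega(\g,\sa)-\omega(\sa,\g_\ca)\big)$, so \eqref{comp} amounts to showing $C=\pi i\,\delta(\infty,\ca)hA$; the remaining identity $\omega(\g,\sa)-\omega(\sa,\g_\ca)=\omega(\sai,\g)-\omega(\g_\ca,\sai)$ is routine from the cocycle relation $\omega(M,N)+\omega(MN,P)=\omega(M,NP)+\omega(N,P)$, which follows at once from \eqref{w1} (this is cleaner than a case check with \eqref{omg}, since $\sa$ is an arbitrary real matrix). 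But the justification you give for $C=\pi i\,\delta(\infty,\ca)hA$ does not work as stated. The Kronecker limit function is $\log|\Im(z)\,\eta_{\G,\ci}(z)|$: it controls only the \emph{modulus} of $\eta_{\G,\ci}$, hence only $\Re\big(\log\eta_{\G,\ci}(\sa(z+h))-\log\eta_{\G,\ci}(\sa z)\big)$. Now $C$ is automatically purely imaginary, since $S(\g)\in\R$, $\log j(\g_\ca,i)\in\{0,\pi i\}$ and the branch integer lies in $\Z$; and the modulus asymptotics indeed give $\Re(\cdots)\to 0$ whether or not $\ca\equiv\ci\bmod\G$. So the limit you invoke is a statement about the \emph{phase} of $\eta_{\G,\ci}$ at $\ca$, and it receives no support at all from the Kronecker-limit expansion as the paper provides it: as written, the one step that is supposed to distinguish $\delta(\infty,\ca)=1$ from $\delta(\infty,\ca)=0$ proves nothing.

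The repair needs one further idea. Since $G(z+h)-G(z)=C$, the function $F(z):=G(z)-Cz/h$ is holomorphic and $h$-periodic, so $F(z)=\sum_{n\in\Z}b_n e^{2\pi i nz/h}$, and the average of $\Re F(x+iy)$ over an $x$-period equals $\Re b_0$ for \emph{every} $y$. On the other hand, the Eisenstein constant term $\delta(\infty,\ca)\,y^s+\phi_{\ci\ca}(s)\,y^{1-s}$ at the cusp $\ca$ shows $\Re G(x+iy)=-\pi A\,\delta(\infty,\ca)\,y+O(1)$ with $x$-dependence exponentially small, so that this average is also $\big(\Im(C)/h-\pi A\,\delta(\infty,\ca)\big)y+O(1)$. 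Comparing the two forces $\Im(C)=\pi A\,\delta(\infty,\ca)h$, i.e.\ $C=\pi i\,\delta(\infty,\ca)hA$, as required. Alternatively one can quote Goldstein's holomorphic Fourier expansion of $\eta_{\G,\ci}$ at an arbitrary cusp \cite{Gn1}, phase included; but that is a substantive external input, not a consequence of the data assembled in Section 2.2 of this paper, and it is exactly the point your sketch leaves open.
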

\begin{proof}
All these results are proved in \cite{JO'SS1}.
Part i) is shown  there in Theorem 1. The first assertion of part ii) follows immediately from part i) and the second assertion  is  \cite[Eq. (3.9)]{JO'SS1}. Propositions 6.3, 5.6 and 5.8 of  \cite{JO'SS1} give parts iii), iv) and v) respectively.
\end{proof}
Note that equation \eqref{comp} above simplifies to
\begin{equation}\label{comp2}
  S(\g)=  \delta(\infty,\ca) \cdot h A
\end{equation}
when $\g_\ca=(\smallmatrix 1
& h \\ 0 & 1 \endsmallmatrix )$; see Proposition 5.6 of \cite{JO'SS1}.

\subsection{The higher-order modular Dedekind symbol $S^*$}

Assume now that the group $\Gamma$ has positive genus. This implies that the space $S_2(\Gamma)$ of weight two holomorphic cusp forms with respect to $\Gamma$ has positive dimension.  Let $
f\in S_2(\Gamma)$ be a nonzero cusp form which remains fixed throughout the discussion.

We define a non-holomorphic Eisenstein series at the cusp at infinity twisted by powers of modular symbols  with
\begin{equation}\label{E m,n}
E^{m,n}_\infty(z,s;f):= \sum_{\g \in \G_\ci\backslash\G} \langle\g,f\rangle^{m}\overline{\langle\g,f\rangle}^{n} \Im(\g z)^s,
\end{equation}
for $\mathrm{Re}(s)>1$. The {\em modular symbol  pairing} is defined by
\begin{equation}\label{mod}
    \s{\g}{f} := 2\pi i\int_{z_0}^{\g z_0} f(w) \, dw.
\end{equation}
Since $f \in S_2(\G)$, the modular symbol pairing is
independent of $z_0 \in \H \cup \{\mathtt{the\,\, cusps\,\, of}\,\, \G\}$, and the pairing
satisfies the identity
\begin{equation}\label{modu}
  \s{\g\tau}{f}=\s{\g}{f}+\s{\tau}{f} \quad \text{for all} \quad \g, \tau \in \G.
\end{equation}
If $\g$ is parabolic or elliptic, then letting $z_0$ be the point fixed by $\g$ shows that $\s{\g}{f}=0$. In particular, \e{E m,n} is well-defined.

Let $\g \in \G$ act on a map $\psi$ by $(\psi|\g)(z):=\psi(\g z)$ and extend this action to all $\C[\G]$ by linearity. We may call  $\psi$ an {\em $n$th-order automorphic form} if
\begin{equation} \label{higher}
    \psi \big|(\g_1 -I)(\g_2-I) \cdots (\g_n-I) = 0  \quad \text{for all} \quad \g_1,\g_2, \dots,\g_n \in \G.
\end{equation}
With this notation, $\G$-invariant functions such as $E_\ci(z,s)$ are first-order. The series $E^{m,n}_\infty(z,s;f)$ has order $m+n+1$. See for example \cite[Sect. 5.1]{CHO} for more details.

For $m=n$
the series \eqref{E m,n} has a pole at $s=1$ of order $m+1$.  An examination of the Kronecker limit formula for $E^{m,m}_\infty(z,s;f)$  as $s\to 1$,
as developed in \cite[Sect. 4]{JO'SS1}, proves the following: there exists a function $\eta^*_{\Gamma,\infty,f}$ which, up to a factor of modulus one, is a modular form of weight $1/2$ with respect
to $\Gamma$, and satisfies the transformation formula
\begin{equation} \label{etatranstar}
\log\eta^*_{\Gamma,\infty,f}(\g z)=  \log\eta^*_{\Gamma,\infty,f}(z) + \frac{1}{2} \log j(\g, z) + \pi i S^*_{\Gamma,\infty,f}(\g)\quad \text{for all} \quad \g \in \G.
\end{equation}
The terms $S^*_{\Gamma,\infty,f}(\g)$ turn out to be independent of $m$ for all $m\geq 1$, and so
the function $S^*=S^*_{\Gamma,\infty,f}$ is called a {\em higher-order modular Dedekind symbol} in \cite{JO'SS1}.
It  possesses the following properties.

\begin{theorem} \label{sa2}
For all $\gamma$ and $\tau$ in $\Gamma$ the following assertions are true.
\begin{itemize}
\item [i)] We have that $S^*(\g \tau)=S^*(\g)+S^*(\tau) + \frac{V_f}{2\pi} \Im \big(\langle\g,f\rangle \overline{\langle\tau,f\rangle}\big) + \omega(\g ,\tau)$ where $V_f := V_\G/(16 \pi^2 \| f\|^2)$ and $\|f\|$ denotes the Petersson norm of $f$.
\item [ii)] If $-I \in\Gamma$ then $S^*(-I)=-1/2$.
\item [iii)] If $\iota(\Gamma)=\Gamma$, then $S^*(\iota(\g))=-S^*(\g)-\rho(\g)$.
\item[iv)] Suppose $\g$ is  parabolic, fixing the cusp $\ca$. Choose a scaling matrix $\sigma_\ca$ satisfying \eqref{scal} and set $\g_\ca:=\sigma_\ca ^{-1}\g \sigma_\ca$. Then
\begin{equation}\label{comp2x}
  S^*(\g)=     -\log(j(\g_\ca,i))/2\pi i-  \omega(\sai,\g)+ \omega(\g_\ca,\sai).
\end{equation}
\item [v)] If $\g$ is  elliptic then $S^*(\g) = S(\g)$.
\end{itemize}
\end{theorem}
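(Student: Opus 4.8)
\section*{Proof proposal}

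The plan is to take part~i) as the engine and to deduce the other four parts from it, combining them with the corresponding facts for $S$ in Theorem~\ref{sa} and with the vanishing of the modular symbol on special elements. Part~i) is the exact analogue of Theorem~\ref{sa}~i) and is read off from the Kronecker limit formula for $E^{m,m}_\infty(z,s;f)$ as $s\to1$ developed in \cite[Sect. 4]{JO'SS1}. The only new feature relative to the $S$-case is that $E^{m,m}_\infty$ is not $\G$-invariant but higher-order of order $2m+1$ in the sense of \eqref{higher}: reindexing the sum in \eqref{E m,n} by $\g\mapsto\g\tau^{-1}$ and using $\langle\g\tau,f\rangle=\langle\g,f\rangle+\langle\tau,f\rangle$ from \eqref{modu} shows that the failure of invariance is governed by the modular symbol, and propagating this through the Laurent expansion at $s=1$ produces exactly the correction $\tfrac{V_f}{2\pi}\Im(\langle\g,f\rangle\overline{\langle\tau,f\rangle})$ on top of the phase factor $\omega(\g,\tau)$. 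I would import this identity and then work with the difference $\theta=S^*-S$.

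Subtracting Theorem~\ref{sa}~i) from part~i) gives the clean cocycle
$$
\theta(\g\tau)=\theta(\g)+\theta(\tau)+\frac{V_f}{2\pi}\Im\big(\langle\g,f\rangle\overline{\langle\tau,f\rangle}\big),
$$
whose correction vanishes as soon as one argument has vanishing modular symbol. Taking $\g=\tau=I$, using $\langle I,f\rangle=0$ and $\omega(I,I)=0$ from \eqref{phs}, gives $\theta(I)=0$. Now if $\langle\g,f\rangle=0$ then all cross terms $\Im(\langle\g,f\rangle\overline{\langle\g^{k-1},f\rangle})$ vanish and induction yields $\theta(\g^k)=k\,\theta(\g)$. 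This settles parts~v) and~ii): for $\g$ elliptic with $\g^r=I$ one has $\langle\g,f\rangle=0$ (use the fixed point of $\g$ as base point in \eqref{mod}), so $0=\theta(\g^r)=r\,\theta(\g)$ forces $\theta(\g)=0$, i.e.\ $S^*(\g)=S(\g)$; and since $-I$ fixes every point, $\langle-I,f\rangle=0$ and $(-I)^2=I$ give $\theta(-I)=0$, whence $S^*(-I)=S(-I)=-1/2$ by Theorem~\ref{sa}~ii).

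For part~iv), a parabolic $\g$ fixing $\ca$ again has $\langle\g,f\rangle=0$, so no modular-symbol term can enter $S^*(\g)$, and the phase-factor bookkeeping producing \eqref{comp} for $S$ is formally identical for $S^*$. The sole difference lies in the value on the parabolic core: whereas $\eta_{\Gamma,\infty}(z+1)=e^{\pi i A}\eta_{\Gamma,\infty}(z)$ forces $S(P_\ci^h)$ to contribute the volume term $hA$, the higher-order Kronecker limit function $\eta^*_{\Gamma,\infty,f}$ is genuinely $1$-periodic, giving $S^*(P_\ci^h)=0$. Carrying this through the same conjugation by $\sa$ removes precisely the term $\delta(\infty,\ca)\,hA$ from \eqref{comp} and yields \eqref{comp2x}. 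Verifying $S^*(P_\ci^h)=0$ from the $q$-expansion of $\eta^*_{\Gamma,\infty,f}$ at $\infty$ is the one genuinely analytic input here.

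Finally, part~iii) is proved as Theorem~\ref{sa}~iii) by applying the anti-holomorphic involution $z\mapsto-\bar z$ to the defining transformation \eqref{etatranstar} of $\eta^*$; it reduces to $\theta(\iota\g)=-\theta(\g)$. The extra ingredient over the $S$-case is the $\iota$-equivariance of the modular symbol: since $\iota$ is realized by $z\mapsto-\bar z$, which satisfies $\iota\g\,(-\bar z)=-\overline{\g z}$ and so conjugates the $\g$-orbit to the $\iota\g$-orbit, tracing this through \eqref{mod} gives $\langle\iota\g,f\rangle=-\overline{\langle\g,f\rangle}$ for the fixed form $f$. A short computation then shows that $\g\mapsto-\theta(\iota\g)$ satisfies the same cocycle as $\theta$ and agrees with it on the special generators, forcing $\theta\circ\iota=-\theta$. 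I expect the two \emph{main obstacles} to be the derivation of part~i) from the Kronecker limit formula and the normalization $S^*(P_\ci^h)=0$ in part~iv); everything else is a formal consequence of the cocycle structure together with the vanishing of modular symbols on elliptic, parabolic and central elements, with part~iii) requiring in addition the equivariance $\langle\iota\g,f\rangle=-\overline{\langle\g,f\rangle}$.
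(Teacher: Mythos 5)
The paper itself does not reprove this theorem: its entire proof is a citation of \cite{JO'SS1} (part i) is Theorem 2 there, part ii) is Corollary 5.4, and parts iii), iv), v) are Propositions 6.2, 5.7 and 5.8 respectively). So your proposal is necessarily a reconstruction, and much of it is sound: importing part i) from the Kronecker limit analysis of \cite[Sect. 4]{JO'SS1} is exactly what the paper does, and your deductions of parts ii) and v) from the cocycle for $\theta=S^*-S$ are correct; indeed they are the same computations the paper records as Corollary \ref{thec}. Part iv) is where your argument starts to thin out: your single analytic input, $S^*(P_\ci^h)=0$ (i.e.\ $1$-periodicity of $\eta^*$ at $\ci$), only settles the case $\ca\equiv\ci\bmod\G$. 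When $\ca\not\equiv\ci$, formula \eqref{comp2x} compared with \eqref{comp} amounts to the assertion $\theta(\g)=0$, which is an analytic statement about the expansion of $\eta^*$ at the foreign cusp $\ca$; it follows neither from periodicity at $\ci$ nor from the cocycle (note $\theta(P_\ci)=-A\neq 0$, so nothing formal forces $\theta$ to vanish on parabolics).

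The genuine gap is in part iii). First, your equivariance identity $\langle\iota(\g),f\rangle=-\overline{\langle\g,f\rangle}$ is false: the reflection $z\mapsto-\bar z$ pulls the differential $f(w)\,dw$ back to $-\overline{\tilde f(z)\,dz}$, where $\tilde f(z):=\overline{f(-\bar z)}$ is the cusp form with conjugated Fourier coefficients, so the correct statement is $\langle\iota(\g),f\rangle=\overline{\langle\g,\tilde f\rangle}$. For $f$ with real coefficients (e.g.\ $f=\eta(z)^2\eta(11z)^2$ for $\G_0(11)$) this reads $\langle\iota(\g),f\rangle=+\overline{\langle\g,f\rangle}$, the opposite sign to yours. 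The sign error happens to cancel in the bilinear quantity $\Im\big(\langle\cdot,f\rangle\overline{\langle\cdot,f\rangle}\big)$, but the replacement of $f$ by $\tilde f$ does not: with the correct identity, $-\theta_f\circ\iota$ satisfies the cocycle of $\theta_{\tilde f}$, not of $\theta_f$, so your argument silently requires $\Im\big(\langle\g,\tilde f\rangle\overline{\langle\tau,\tilde f\rangle}\big)=\Im\big(\langle\g,f\rangle\overline{\langle\tau,f\rangle}\big)$ for all $\g,\tau$, i.e.\ $\theta_f=\theta_{\tilde f}$, and this is never addressed. Second, your closing step is circular: two functions satisfying the same cocycle identity differ by a homomorphism $\G\to\R$, and such homomorphisms vanish on torsion (elliptic) elements but need not vanish on parabolic or hyperbolic generators. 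Since $S^*$ exists only when the genus is positive, $\mathrm{Hom}(\G,\R)$ has rank $2g+n-1\geq 2$ here, and $\theta$ is typically nonzero exactly on the hyperbolic generators (e.g.\ $\theta_{11}(Q)=3/10$ in \eqref{valss}); so ``agrees with it on the special generators'' is precisely the assertion $\theta(\iota(\g))=-\theta(\g)$ you are trying to prove for those generators, not something available in advance. To close part iii) one must instead apply the reflection to the twisted Eisenstein series itself (giving $E^{m,m}_\ci(-\bar z,s;f)=E^{m,m}_\ci(z,s;\tilde f)$, hence a relation between $\eta^*_f$ and $\eta^*_{\tilde f}$) and then deal with the $f$ versus $\tilde f$ discrepancy; this is presumably the content of Proposition 6.2 of \cite{JO'SS1}, which the paper invokes.
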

\begin{proof}
Part i) is given in Theorem 2 of \cite{JO'SS1}. Part ii) is contained in Corollary 5.4 there. Propositions 6.2, 5.7 and 5.8 of \cite{JO'SS1} show parts iii), iv) and v) respectively.
\end{proof}

\subsection{The difference $\theta:=S^*-S$}

We now consider the difference $\theta:=S^*-S$ from which the generalized Dedekind sum $H^*$ is constructed. We could have used just $S^*$ to define $H^*$, but the advantage of using $\theta$ is that all the phase factors disappear, as we see now.

\begin{cor} \label{thec}
For all $\gamma$ and $\tau$ in $\Gamma$ the following assertions are true.
\begin{itemize}
\item[i)] We have that $\theta(\gamma\tau)=\theta(\gamma)+\theta(\tau)+\tfrac{V_f}{2\pi} \mathrm{Im}
    \big(\langle \gamma,f\rangle\overline{\langle \tau,f\rangle}\big)$.

\item[ii)] If $-I\in \Gamma$ then $\theta(-I)=0$, and  $\theta(-\gamma)=\theta(\gamma)$.

\item[iii)] If $\iota(\Gamma)=\Gamma$, then  $\theta(\iota(\gamma))=-\theta(\gamma)$.

\item[iv)] Assume $\gamma$ is a parabolic element fixing a cusp $\mathfrak{a}$. Choose a scaling matrix $\sigma_\ca$ satisfying \eqref{scal}. Then    $\sigma_\ca ^{-1}\g \sigma_\ca= \pm(\smallmatrix 1
& h \\ 0 & 1 \endsmallmatrix )$ for a well-defined $h\in \Z$. We have $\theta(\gamma) =-\delta(\infty,\ca)  \cdot hA$ where $\delta(\infty,\ca)=1$ in the case $\ca\equiv \infty \bmod \G$ and $\delta(\infty,\ca)=0$ otherwise.

\item[v)] If $\g$ is an elliptic element, then $\theta(\g) = 0.$

\item[vi)] If $\gamma$ or $\tau$ is either elliptic or parabolic, then $\theta(\gamma\tau)=\theta(\gamma)+\theta(\tau)$ since
modular symbols are zero for elliptic or parabolic elements.
\item[vii)] For each $n\in\Z$, $\theta(\gamma^n)=n\cdot \theta(\gamma)$.
\end{itemize}
\end{cor}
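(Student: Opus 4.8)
The goal is to prove Corollary~\ref{thec}, whose seven parts describe the behavior of $\theta := S^* - S$. The overarching strategy is simple: since $\theta$ is defined as a difference, I plan to subtract the corresponding statements in Theorem~\ref{sa} (for $S$) and Theorem~\ref{sa2} (for $S^*$), exploiting the fact that every phase factor $\omega$ appears identically in both and hence cancels. This is precisely the advantage highlighted in the remark preceding the corollary, so each part should reduce to a short computation.

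I would proceed part by part. For (i), subtracting part (i) of Theorem~\ref{sa} from part (i) of Theorem~\ref{sa2} gives $\theta(\g\tau) = \theta(\g) + \theta(\tau) + \tfrac{V_f}{2\pi}\,\Im(\langle\g,f\rangle\overline{\langle\tau,f\rangle})$, since the two $\omega(\g,\tau)$ terms cancel. For (ii), I use part (ii) of both theorems: $S^*(-I) = S(-I) = -1/2$, so $\theta(-I) = 0$; then $\theta(-\g) = \theta((-I)\g)$ is handled by applying (i) with the modular symbol term vanishing, noting $\langle -I, f\rangle = 0$ since $-I$ is (central, acting trivially, hence effectively) elliptic/trivial. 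For (iii), I subtract the two $\iota$-transformation laws (part (iii) of each theorem); the $-\rho(\g)$ terms cancel, leaving $\theta(\iota(\g)) = -\theta(\g)$. For (iv), subtracting \eqref{comp} from \eqref{comp2x} makes the logarithm term and both $\omega$ terms cancel, leaving only $\theta(\g) = -\delta(\infty,\ca)\cdot hA$. For (v), part (v) of Theorem~\ref{sa2} states $S^*(\g) = S(\g)$ for elliptic $\g$, so $\theta(\g) = 0$ immediately.

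Parts (vi) and (vii) are consequences of (i) together with the vanishing of modular symbols on parabolic and elliptic elements, as recorded just after \eqref{modu}. For (vi), if either $\g$ or $\tau$ is elliptic or parabolic, then the relevant modular symbol is zero, so the product $\langle\g,f\rangle\overline{\langle\tau,f\rangle}$ vanishes and (i) collapses to additivity $\theta(\g\tau) = \theta(\g)+\theta(\tau)$. For (vii), I would argue by induction on $n\geq 0$: the base case is $\theta(I) = 0$ (from (i) with $\g=\tau=I$, or directly), and the inductive step $\theta(\g^{n+1}) = \theta(\g^n) + \theta(\g) + \tfrac{V_f}{2\pi}\Im(\langle\g^n,f\rangle\overline{\langle\g,f\rangle})$ uses the additivity \eqref{modu} of the modular symbol, which gives $\langle\g^n,f\rangle = n\langle\g,f\rangle$, a real multiple of $\langle\g,f\rangle$, so the imaginary part of $\langle\g^n,f\rangle\overline{\langle\g,f\rangle} = n|\langle\g,f\rangle|^2$ vanishes; negative $n$ then follows by combining with $\theta(\g^{-1}) = -\theta(\g)$.

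I do not anticipate a genuine obstacle here, as the corollary is designed so that the hard analytic content already lives in Theorems~\ref{sa} and~\ref{sa2}. The one point requiring slight care is part (vii): one must verify that the quadratic modular-symbol term in (i) does not accumulate across powers, and the key observation is that $\Im(n|\langle\g,f\rangle|^2) = 0$ because $|\langle\g,f\rangle|^2$ is real. Ensuring the sign and the $\pm$ ambiguity in the parabolic scaling matrix of part (iv) are tracked consistently with the conventions of \eqref{comp} and \eqref{comp2x} is the only other place where a careless subtraction could introduce an error.
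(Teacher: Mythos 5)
Your proposal is correct and takes exactly the approach the paper intends: Corollary~\ref{thec} is stated there without an explicit proof precisely because each part follows by subtracting the corresponding parts of Theorem~\ref{sa} from Theorem~\ref{sa2} (the phase factors $\omega$ and the $\rho$ and logarithm terms cancelling), supplemented by the vanishing of modular symbols on parabolic/elliptic elements and the additivity \eqref{modu}, which is precisely what you do. Your treatment of part (vii) --- observing that $\langle\g^n,f\rangle\overline{\langle\g,f\rangle}=n|\langle\g,f\rangle|^2$ is real, so the quadratic term in part (i) never contributes across powers, and handling negative $n$ via $\theta(\g^{-1})=-\theta(\g)$ --- is the only step requiring care, and you carried it out correctly.
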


Before continuing, we describe some further properties of this map.
The following result states that, in effect, $\theta$ detects the difference between $\Gamma$ and
its abelianization.

\begin{lemma}\label{prop-theta}
For all $\gamma$ and $\tau$ in $\Gamma$,
$$
\theta(\gamma\tau)-\theta(\gamma)-\theta(\tau) = \tfrac12  \theta([\gamma,\tau]),
$$
where the commutator $[\gamma,\tau]$ is defined as $[\gamma,\tau] :=\gamma\tau\gamma^{-1}\tau^{-1}$. 
\end{lemma}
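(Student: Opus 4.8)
The plan is to work directly from the cocycle identity in Corollary \ref{thec}(i), which says that
$$
\theta(\gamma\tau)-\theta(\gamma)-\theta(\tau) = \frac{V_f}{2\pi}\,\mathrm{Im}\big(\langle\gamma,f\rangle\overline{\langle\tau,f\rangle}\big).
$$
Thus the left-hand side is, up to the fixed constant $V_f/2\pi$, the imaginary part of a Hermitian-type product of the modular symbols. Writing $u:=\langle\gamma,f\rangle$ and $v:=\langle\tau,f\rangle$, the quantity $\mathrm{Im}(u\bar v)$ is precisely the antisymmetric (symplectic) pairing on the complex numbers viewed as $\mathbb{R}^2$, and this is exactly the kind of expression that computes the abelianized defect of a quadratic form. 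So the whole statement should reduce to showing that $\theta([\gamma,\tau])$ equals twice $\mathrm{Im}(u\bar v)\cdot V_f/(2\pi)$.

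First I would compute $\theta([\gamma,\tau])$ by repeatedly applying the cocycle formula of Corollary \ref{thec}(i) together with the additivity of the modular symbol pairing in \eqref{modu}. Since $\langle\cdot,f\rangle$ is a genuine homomorphism to $(\mathbb{C},+)$ by \eqref{modu}, we have $\langle\gamma^{-1},f\rangle=-\langle\gamma,f\rangle$, and hence $\langle[\gamma,\tau],f\rangle = \langle\gamma\tau\gamma^{-1}\tau^{-1},f\rangle = u+v-u-v = 0$. Expanding $\theta([\gamma,\tau])$ one factor at a time, each application of Corollary \ref{thec}(i) contributes an $\mathrm{Im}$ term of the form $\frac{V_f}{2\pi}\mathrm{Im}(\langle A,f\rangle\overline{\langle B,f\rangle})$ while the straight $\theta$-terms telescope: because $\langle\cdot,f\rangle$ is additive, the sum $\theta(\gamma)+\theta(\tau)+\theta(\gamma^{-1})+\theta(\tau^{-1})$ telescopes to $\theta(I)=0$ once one also uses $\theta(\gamma^{-1})=-\theta(\gamma)$ (which follows from part (i) applied to $\gamma\gamma^{-1}=I$, since the $\mathrm{Im}$ term vanishes when one argument is $\gamma$ and the other $\gamma^{-1}$). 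What survives is a sum of imaginary parts of products of $\pm u$ and $\pm v$; collecting these should give exactly $\frac{V_f}{2\pi}\cdot 2\,\mathrm{Im}(u\bar v)$, i.e. $\theta([\gamma,\tau]) = \frac{V_f}{\pi}\mathrm{Im}(u\bar v) = 2\big(\theta(\gamma\tau)-\theta(\gamma)-\theta(\tau)\big)$, which is the claim.

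The key bookkeeping step is to track the signs and the order of arguments in the four successive applications of the cocycle relation, since $\mathrm{Im}(u\bar v)$ is antisymmetric and conjugation-sensitive: swapping the two arguments of the pairing flips the sign, so careless ordering would produce cancellation to zero rather than the desired factor of two. I expect this antisymmetry bookkeeping — making sure that the two contributions reinforce (giving $2\,\mathrm{Im}(u\bar v)$) rather than cancel — to be the main obstacle, but it is purely mechanical. A cleaner way to organize the same computation, which I would prefer for exposition, is to define $B(\gamma,\tau):=\frac{V_f}{2\pi}\mathrm{Im}(\langle\gamma,f\rangle\overline{\langle\tau,f\rangle})$, note that $B$ is $\mathbb{Z}$-bilinear and antisymmetric in its arguments (bilinearity from \eqref{modu}, antisymmetry from $\mathrm{Im}(u\bar v)=-\mathrm{Im}(v\bar u)$), and observe that $\theta(\gamma\tau)-\theta(\gamma)-\theta(\tau)=B(\gamma,\tau)$ exhibits $\theta$ as a quasimorphism whose coboundary is $B$. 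The standard identity for such objects — that the defect on a commutator equals twice the antisymmetric bilinear form — then gives $\theta([\gamma,\tau])=2B(\gamma,\tau)$ immediately, which is precisely the asserted equality.
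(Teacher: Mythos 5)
Your proposal is correct and takes essentially the same route as the paper: the paper's proof likewise applies Corollary \ref{thec}(i) together with the additivity \eqref{modu} (hence $\langle\gamma^{-1},f\rangle=-\langle\gamma,f\rangle$ and $\theta(\gamma^{-1})=-\theta(\gamma)$) to expand $\theta([\gamma,\tau])$ and obtain $\theta([\gamma,\tau])=\tfrac{V_f}{\pi}\mathrm{Im}\big(\langle\gamma,f\rangle\overline{\langle\tau,f\rangle}\big)$, then substitutes this back into the cocycle identity. The only difference is presentational: you package the final step as the standard ``defect on a commutator equals twice the antisymmetric form'' identity, whereas the paper simply carries out the short explicit expansion.
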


\begin{proof}
We first claim that
\begin{equation}\label{skp}
  \theta([\gamma,\tau]) = \tfrac{V_f}{\pi}\mathrm{Im}\big(\langle\gamma,f\rangle\overline{\langle\tau,f\rangle}\big).
\end{equation}
For this
\begin{align*}
\theta([\gamma,\tau]) &= \theta(\gamma\tau\gamma^{-1})-\theta(\tau) = \theta(\gamma\tau) -\theta(\gamma)-\theta(\tau)-\tfrac{V_f}{2\pi}\mathrm{Im}\big(\langle\tau,f\rangle\overline{\langle\gamma,f\rangle}\big)\\
&= \tfrac{V_f}{2\pi}\mathrm{Im}(\langle\gamma,f\rangle\overline{\langle\tau,f\rangle})-\tfrac{V_f}{2\pi}\mathrm{Im}
\big(\langle\tau,f\rangle\overline{\langle\gamma,f\rangle}\big)\\
&= \tfrac{V_f}{\pi}\mathrm{Im}\big(\langle\gamma,f\rangle\overline{\langle\tau,f\rangle}\big).
\end{align*}
Using \eqref{skp} in Corollary \ref{thec} part i) completes the proof.
\end{proof}

The following corollary justifies the use of the term \textit{sum} when referring to $H^*$.

\begin{cor}\label{iteration}
For any word $\gamma_1^{k_1}\cdots \gamma_m^{k_m}$ in $\G$
$$
\theta(\gamma_1^{k_1}\cdots \gamma_m^{k_m}) =  k_1\cdot\theta(\gamma_1)+\dots +k_m\cdot \theta(\gamma_m) + \tfrac12 \sum_{1\leq i<j\leq m} k_i k_j\cdot \theta([\gamma_i,\gamma_j]).
$$
\end{cor}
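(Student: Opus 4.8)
The plan is to prove the identity by induction on the number $m$ of distinct generators in the word, with the essential algebraic input being that the modular symbol pairing $\gamma\mapsto\langle\gamma,f\rangle$ is a homomorphism from $\Gamma$ into the additive group $(\C,+)$. This is exactly the content of \eqref{modu}. The consequence I would exploit is that the correction term $\tfrac{V_f}{2\pi}\mathrm{Im}(\langle\gamma,f\rangle\overline{\langle\tau,f\rangle})$ in the cocycle relation of Corollary \ref{thec}(i) is biadditive in $\gamma$ and $\tau$, and that by \eqref{skp} it equals $\tfrac12\theta([\gamma,\tau])$. Thus the whole argument reduces to a routine bilinear-cocycle induction.

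First I would dispose of the base case $m=1$: the double sum is empty, and the claim is just $\theta(\gamma_1^{k_1})=k_1\theta(\gamma_1)$, which is Corollary \ref{thec}(vii). For the inductive step I would set $w=w'\cdot\gamma_m^{k_m}$ with $w'=\gamma_1^{k_1}\cdots\gamma_{m-1}^{k_{m-1}}$ and apply Corollary \ref{thec}(i) to split $\theta(w)=\theta(w')+\theta(\gamma_m^{k_m})+\tfrac{V_f}{2\pi}\mathrm{Im}(\langle w',f\rangle\overline{\langle\gamma_m^{k_m},f\rangle})$. The first term is supplied by the induction hypothesis, and the second is $k_m\theta(\gamma_m)$ by Corollary \ref{thec}(vii).

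The crucial simplification is the cross term. Since $\langle\cdot,f\rangle$ is a homomorphism, I would write $\langle w',f\rangle=\sum_{i<m}k_i\langle\gamma_i,f\rangle$ and $\langle\gamma_m^{k_m},f\rangle=k_m\langle\gamma_m,f\rangle$, so by $\R$-bilinearity of $\mathrm{Im}(\,\cdot\,\overline{\cdot}\,)$ the cross term expands as $\sum_{i<m}k_ik_m\cdot\tfrac{V_f}{2\pi}\mathrm{Im}(\langle\gamma_i,f\rangle\overline{\langle\gamma_m,f\rangle})$, which by \eqref{skp} equals $\tfrac12\sum_{i<m}k_ik_m\,\theta([\gamma_i,\gamma_m])$. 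Adding this to the inductive expression for $\theta(w')$ precisely contributes the missing $j=m$ terms of the double sum, giving the claimed formula.

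I do not expect a genuine obstacle, since this is a standard induction; the only points needing care are invoking \eqref{modu} to pull the exponents $k_i$ out of the modular symbols (and, implicitly, $\theta(\gamma_i^{k_i})=k_i\theta(\gamma_i)$), and tracking the index ranges so that the new cross terms land exactly at $j=m$. One could instead package everything as the general product rule $\theta(g_1\cdots g_n)=\sum_i\theta(g_i)+\sum_{i<j}\tfrac12\theta([g_i,g_j])$ and then specialize to $g_i=\gamma_i^{k_i}$, but the direct induction on $m$ seems the most economical route.
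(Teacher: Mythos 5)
Your proof is correct and follows essentially the same route as the paper: the paper iterates Lemma \ref{prop-theta} (which, via \eqref{skp}, is just Corollary \ref{thec}(i) in commutator form) and simplifies the resulting cross terms with \eqref{skp} and \eqref{modu}, exactly the bilinear-cocycle induction you carry out explicitly. Your write-up merely makes the paper's one-line argument precise, including the correct handling of negative exponents via Corollary \ref{thec}(vii) and the additivity of the modular symbol pairing.
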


\begin{proof}
The assertion follows by iterating  Lemma \ref{prop-theta} and simplifying with \eqref{skp} and \e{modu}.
\end{proof}

We note here that $\theta$ can be viewed as the \textit{homogenization} of the 
generalized Dedekind sum $H^*$.

\begin{prop}
For all $\tau \in \G$ with $\tau \not\in \G_\ci$ we have
\begin{equation}\label{lmh}
  \lim_{n\to\infty}\frac{H^*(\tau^n)}{n} = \theta(\tau).
\end{equation}
\end{prop}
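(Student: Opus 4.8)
The plan is to split into the three conjugacy types available to an element $\tau\in\G$ with $\tau\notin\G_\ci$, and to exploit the additivity $\theta(\tau^n)=n\,\theta(\tau)$ from Corollary~\ref{thec}(vii). Writing $\tau^n=\m{a_n}{b_n}{c_n}{d_n}$, the strategy in the cases where $c_n\neq 0$ is to insert the definition of $H^*$ and obtain
\begin{equation*}
  \frac{H^*(\tau^n)}{n}=\frac{\theta(\tau^n)}{n}+\frac{A}{n}\cdot\frac{a_n+d_n}{c_n}=\theta(\tau)+\frac{A}{n}\cdot\frac{\tr(\tau^n)}{c_n},
\end{equation*}
so that everything reduces to showing $\tr(\tau^n)/c_n=o(n)$; in fact I expect this quantity to stay bounded. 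First I would observe that the recalled fact ``$c_\g=0\Rightarrow\g\in\G_\ci$'' forces $c_\tau\neq 0$, so $\tau$ does not fix $\ci$ and is therefore either elliptic, parabolic fixing some cusp $\ca\neq\ci$, or hyperbolic.

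The elliptic case must be handled separately because $c_n$ may vanish: here $\tau$ has finite order, the set $\{\tau^n\}_{n\ge1}$ is finite, hence $H^*(\tau^n)$ is bounded, and since $\theta(\tau)=0$ by Corollary~\ref{thec}(v) both sides of \eqref{lmh} equal $0$. In the parabolic and hyperbolic cases I would first note that $\tau^n\notin\G_\ci$ for every $n\ge1$: a nontrivial power of a parabolic fixing $\ca\neq\ci$ still fixes $\ca$, and a power of a hyperbolic element is again hyperbolic, so in neither situation can $\tau^n$ fix $\ci$. Thus $c_n\neq 0$ and the displayed identity applies.

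To estimate $\tr(\tau^n)/c_n$ I would conjugate $\tau=\sigma M\sigma^{-1}$ by some $\sigma=\ms{p}{q}{r}{s}\in\SL(2,\R)$ normalizing $\tau$: in the parabolic case $M=\pm\ms{1}{h}{0}{1}$ gives $\tr(\tau^n)=\pm 2$ while $|c_n|=n|h|\,r^2$ grows linearly, with $r\neq0$ since the fixed point $\ca=\sigma\ci$ is finite and $h\neq0$; in the hyperbolic case $M=\mathrm{diag}(\lambda,\lambda^{-1})$ with $|\lambda|>1$ gives $\tr(\tau^n)=\lambda^n+\lambda^{-n}$ and $c_n=rs(\lambda^n-\lambda^{-n})$ with $rs\neq0$ because both fixed points are finite. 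In either case $\tr(\tau^n)/c_n$ is bounded (tending to $0$ and to $1/(rs)$ respectively), so the correction term $\tfrac{A}{n}\tr(\tau^n)/c_n\to 0$ and the limit equals $\theta(\tau)$. The main obstacle is precisely verifying that $c_n$ does not grow more slowly than the trace, which is what the conjugation computation secures: the genuine subtlety is the nonvanishing $r\neq0$ (parabolic) and $rs\neq0$ (hyperbolic), both of which follow from the fact that neither fixed point of $\tau$ is the cusp $\ci$.
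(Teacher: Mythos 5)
Your proof is correct, and in the parabolic and hyperbolic cases it is essentially the paper's own argument: both use $\theta(\tau^n)=n\,\theta(\tau)$ to reduce \eqref{lmh} to the bound $\tr(\tau^n)/c_n=o(n)$, and both verify this by conjugating $\tau$ to a normal form and computing $c_n$ explicitly — your $|c_n|=n|h|r^2$ and $c_n=rs(\lambda^n-\lambda^{-n})$ are the paper's formulas in different notation, and your observation that $\tau^n\notin\G_\ci$ forces $c_n\neq0$ plays the role of the paper's direct inspection of the conjugated matrices. The genuine divergence is the elliptic case, and there your route is the better one. The paper does not separate elliptic from hyperbolic: it treats every $\tau$ with $|\tr(\tau)|\neq2$ by diagonalization and asserts that the error term $A(\lambda_1-\lambda_2)(\lambda_1^n+\lambda_2^n)/\bigl(c_1(\lambda_1^n-\lambda_2^n)\bigr)$ converges as $n\to\infty$. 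For elliptic $\tau$ of order $r$ that assertion fails: the eigenvalues are unimodular, so this term merely oscillates, and whenever $r\mid n$ one has $\tau^n=\pm I$ and $c_n=0$, so the paper's formula \eqref{fds} (which invokes the $c\neq0$ branch of the definition of $H^*$) is not even applicable along that subsequence. Your separate treatment — finite order, hence $\{\tau^n\}_{n\geq1}$ is a finite set and $H^*(\tau^n)$ is bounded, while $\theta(\tau)=0$ by Corollary~\ref{thec} part v) — avoids both problems, so your write-up in fact repairs a small imprecision in the published proof. What every case really requires is only that $\tr(\tau^n)/c_n$ stay bounded (where defined), not that it converge, and your phrasing makes that explicit.
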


\begin{proof}
We will use the notation $(\begin{smallmatrix} a_n & b_n \\c_n& d_n \end{smallmatrix})$ for the entries of $\tau^n$. If $\tau=\pm I$ then \eqref{lmh} is true since $H^*(\tau^n)=\theta(\tau)=0$. Now assume that $\tau$ is parabolic, so that $|\tr(\tau)|=2$. We may choose $\sigma \in \SL(2,\R)$ such that $\sigma^{-1}\tau \sigma = \pm (\begin{smallmatrix} 1&h\\0&1\end{smallmatrix})$ for nonzero $h\in \Z$. If we write $\sigma=(\begin{smallmatrix} \alpha & \beta\\ \g & \delta\end{smallmatrix})$ then
$$
\tau^n = \pm \sigma \left(\begin{matrix} 1 & n h \\ 0 & 1 \end{matrix} \right) \sigma^{-1}
= \pm \left(\begin{matrix} 1-n h \alpha \g & n h \alpha^2 \\ -n h \g^2 & 1+nh \alpha \g \end{matrix} \right).
$$
Since $\tau \not\in \G_\ci$ we have  $c_1 \neq 0$ and therefore $c_n \neq 0$ for all $n$. Hence
$$
H^*(\tau^n) = \theta(\tau^n)+A \frac{a_n+d_n}{c_n} = n \theta(\tau)+A \frac{2}{-nh \gamma^2}
$$
and \eqref{lmh} follows.

In the remaining case we have $|\tr(\tau)|\neq 2$ and so $c_1\neq0$ (cf.~Proposition 1.17 of \cite{S}), $\tau$ has two distinct eigenvalues $\lambda_1$, $\lambda_2$ and is diagonalizable. We find
\begin{equation}\label{pli}
  \tau^n = \pm \sigma \left(\begin{matrix} \lambda_1^n & 0 \\ 0 & \lambda_2^n \end{matrix} \right) \sigma^{-1}
= \pm \left(\begin{matrix} \lambda_2^n+ \alpha \delta(\lambda_1^n-\lambda_2^n) &-\alpha \beta(\lambda_1^n-\lambda_2^n) \\  \g \delta(\lambda_1^n-\lambda_2^n) & \lambda_1^n- \alpha \delta(\lambda_1^n-\lambda_2^n) \end{matrix} \right).
\end{equation}
This implies the formulas (taking the positive sign in \eqref{pli}, otherwise multiply by $-1$)
\begin{alignat*}{2}
a_n & =\lambda_2^n +(a_1-\lambda_2)\frac{\lambda_1^n-\lambda_2^n}{\lambda_1-\lambda_2}, & \qquad b_n & =b_1 \frac{\lambda_1^n-\lambda_2^n}{\lambda_1-\lambda_2},\\
c_n & =c_1 \frac{\lambda_1^n-\lambda_2^n}{\lambda_1-\lambda_2}, & d_n & =\lambda_1^n +(d_1-\lambda_1)\frac{\lambda_1^n-\lambda_2^n}{\lambda_1-\lambda_2}.
\end{alignat*}
Then
\begin{equation}\label{fds}
H^*(\tau^n)= \theta(\tau^n) + A\frac{a_n+d_n}{c_n} = n\theta(\tau) + A\frac{\lambda_1^n+\lambda_2^n}{c_1(\lambda_1^n-\lambda_2^n)}(\lambda_1-\lambda_2).
\end{equation}
The last term in \e{fds} goes to $\pm A(\lambda_1-\lambda_2)/c_1$ as $n\to\infty$ and \eqref{lmh} follows.
\end{proof}

Equation \eqref{lmh} is not true when $\tau = \pm P_\ci^k$; the left side is $0$  and the right side is $-k A$.  For a further
discussion of these points, we refer to the proof of Lemma \ref{lem} below.

\section{Proofs of the main results}
The following initial lemmas establish basic properties of $H$ and $H^*$ which are required in our analysis.
\begin{lemma}\label{cor1}
Assume that $-I$ and $\g$ are in $\G$. Then $H(-\gamma)=H(\gamma)$ and $H^*(-\gamma)=H^*(\gamma)$. \end{lemma}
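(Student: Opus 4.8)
The plan is to prove both identities by a direct computation from the Definition of the generalized Dedekind sums, using the known behavior of $S$, $\theta$, and the modular symbol pairing under multiplication by $-I$. I would split into the two cases $c_\gamma \neq 0$ and $c_\gamma = 0$ exactly as in the Definition, since the formulas for $H$ and $H^*$ differ in these two cases. The key inputs are: from Theorem \ref{sa}, the cocycle property $S(-I \cdot \gamma) = S(-I) + S(\gamma) + \omega(-I,\gamma)$ together with $S(-I) = -1/2$; from Corollary \ref{thec} part ii), the clean relation $\theta(-\gamma) = \theta(\gamma)$; and the fact that multiplication by $-I$ negates every entry of $\gamma$, so that the ratios $(a+d)/c$ and $b/d$ appearing in the Definition are invariant.

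First I would handle $H^*$, which is the easier case. Write $-\gamma = (-I)\gamma$, so its entries are $-a_\gamma, -b_\gamma, -c_\gamma, -d_\gamma$. In the Definition, $H^*(-\gamma) = \theta(-\gamma) + A\frac{(-a_\gamma)+(-d_\gamma)}{-c_\gamma}$ when $c_\gamma \neq 0$; the ratio $\frac{-a_\gamma - d_\gamma}{-c_\gamma} = \frac{a_\gamma + d_\gamma}{c_\gamma}$ is unchanged, and $\theta(-\gamma) = \theta(\gamma)$ by Corollary \ref{thec}(ii). Hence $H^*(-\gamma) = H^*(\gamma)$. The case $c_\gamma = 0$ is identical after replacing $\frac{a+d}{c}$ by $\frac{b}{d}$, whose sign is likewise invariant under entrywise negation.

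Next I would treat $H$, where the phase factor $\omega(-I,\gamma)$ enters. Using Theorem \ref{sa}(i) and (ii), $S(-\gamma) = S(\gamma) - \tfrac12 + \omega(-I,\gamma)$, and I would evaluate $\omega(-I,\gamma)$ via \eqref{phs2} (with $k=0$), which gives $\omega(-I,\gamma) = (1+\sgn(c_\gamma))/2$ when $c_\gamma \neq 0$ and $(1-\sgn(d_\gamma))/2$ when $c_\gamma = 0$. In the case $c_\gamma \neq 0$, the Definition gives $H(-\gamma) = -S(-\gamma) + A\frac{a_\gamma + d_\gamma}{c_\gamma} - \tfrac14 \sgn(-c_\gamma)$. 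Substituting the expression for $S(-\gamma)$ and collecting the constant terms, the difference $H(-\gamma) - H(\gamma)$ reduces to $\tfrac12 - \omega(-I,\gamma) - \tfrac14\big(\sgn(-c_\gamma) - \sgn(c_\gamma)\big)$. Since $\sgn(-c_\gamma) = -\sgn(c_\gamma)$, the last bracket equals $-2\sgn(c_\gamma)$, and plugging in $\omega(-I,\gamma) = (1+\sgn(c_\gamma))/2$ makes everything cancel to zero. The case $c_\gamma = 0$ is analogous, using $(\sgn(-d_\gamma)-1)/4$ from the Definition together with $\omega(-I,\gamma) = (1 - \sgn(d_\gamma))/2$.

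The main obstacle, such as it is, lies in the bookkeeping of signs for $H$: one must be careful that $\omega(-I,\gamma)$ is computed correctly from \eqref{phs2} and that the $\tfrac14\sgn$-terms in the Definition are correctly paired with the $S(-I) = -\tfrac12$ contribution. Everything else is formal substitution, and I expect both identities to fall out once the sign cancellations are verified in each of the two cases.
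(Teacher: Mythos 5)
Your proposal is correct and follows essentially the same route as the paper's own proof: write $-\gamma=(-I)\gamma$, apply Theorem \ref{sa} parts i) and ii) together with \eqref{phs2} at $k=0$ to evaluate $S(-\gamma)$, check the sign cancellations in each case $c_\gamma\neq 0$ and $c_\gamma=0$, and dispose of $H^*$ immediately via $\theta(-\gamma)=\theta(\gamma)$ from Corollary \ref{thec} ii). The only difference is cosmetic: the paper computes $H(-\gamma)$ directly down to $H(\gamma)$, while you organize the same cancellation as showing $H(-\gamma)-H(\gamma)=0$.
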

\begin{proof}
Write $\gamma =\left(\smallmatrix a & b \\ c & d \endsmallmatrix \right)$. For $c\neq 0$ we have
\begin{align*}
  H(-\g) & = -S(-\g)+A\frac{a+d}c-\frac{\sgn(-c)}4 \\
  & = -S(-I)-S(\g)-\omega(-I,\g)+A\frac{a+d}c+\frac{\sgn(c)}4\\
  & = \frac 12-S(\g)-\frac{1+\sgn(c)}2+A\frac{a+d}c+\frac{\sgn(c)}4 = H(\g).
\end{align*}
We used part i) of Theorem \ref{sa} and \eqref{phs2} for $k=0$.
The case when $c=0$ is similar.  The proof that $H^*(-\gamma)=H^*(\gamma)$ is  simpler, using only that
$\theta(-\gamma)=\theta(\gamma)$ which is from part ii) of Corollary \ref{thec}.
\end{proof}


\begin{lemma}\label{sym}
If $\iota(\Gamma)=\Gamma$ and $\g \in \G$ then
$
H(\iota(\gamma))=-H(\gamma)
$ and
$
H^*(\iota(\gamma))=-H^*(\gamma).
$
\end{lemma}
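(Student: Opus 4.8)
The plan is to unwind both definitions directly, splitting on whether $c_\gamma$ vanishes, and to feed in the known behavior of the two building blocks $S$ and $\theta$ under $\iota$. Recall that $\iota$ sends $\left(\smallmatrix a&b\\c&d\endsmallmatrix\right)$ to $\left(\smallmatrix a&-b\\-c&d\endsmallmatrix\right)$, so it negates $c_\gamma$ and $b_\gamma$ while fixing $a_\gamma$ and $d_\gamma$; in particular the trace $a_\gamma+d_\gamma$ is unchanged and $c_{\iota(\gamma)}=0$ exactly when $c_\gamma=0$. The two inputs I would use are $S(\iota(\gamma))=-S(\gamma)-\rho(\gamma)$ from Theorem \ref{sa}(iii) and $\theta(\iota(\gamma))=-\theta(\gamma)$ from Corollary \ref{thec}(iii).

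The $H^*$ statement is the easy half, precisely because $\theta$ transforms with no correction term. When $c_\gamma\neq 0$, the definition gives $H^*(\iota(\gamma))=\theta(\iota(\gamma))+A\frac{a_\gamma+d_\gamma}{-c_\gamma}$; substituting $\theta(\iota(\gamma))=-\theta(\gamma)$ and using $A\frac{a_\gamma+d_\gamma}{-c_\gamma}=-A\frac{a_\gamma+d_\gamma}{c_\gamma}$ shows both summands flip sign, so $H^*(\iota(\gamma))=-H^*(\gamma)$. When $c_\gamma=0$ the identical argument applies, now with $A\frac{-b_\gamma}{d_\gamma}=-A\frac{b_\gamma}{d_\gamma}$ in place of the trace term.

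For $H$ the case $c_\gamma\neq 0$ is again short: here $\rho(\gamma)=0$, so $S(\iota(\gamma))=-S(\gamma)$, and the only point needing care is the sign term, which behaves correctly since $-\tfrac14\sgn(-c_\gamma)=\tfrac14\sgn(c_\gamma)$. Assembling the three pieces gives $H(\iota(\gamma))=S(\gamma)-A\frac{a_\gamma+d_\gamma}{c_\gamma}+\tfrac14\sgn(c_\gamma)$, which is exactly $-H(\gamma)$.

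The step I expect to be the real obstacle is the $c_\gamma=0$ case of $H$, where the $\rho$-correction in Theorem \ref{sa}(iii) must exactly absorb the mismatch between the constant term of $H(\iota(\gamma))$ and that of $-H(\gamma)$. Writing out $H(\iota(\gamma))=S(\gamma)+\rho(\gamma)-A\frac{b_\gamma}{d_\gamma}+\tfrac14(\sgn(d_\gamma)-1)$ and comparing with $-H(\gamma)=S(\gamma)-A\frac{b_\gamma}{d_\gamma}-\tfrac14(\sgn(d_\gamma)-1)$, the two agree precisely when $\rho(\gamma)=\tfrac12(1-\sgn(d_\gamma))$. I would then verify this by inspecting the two subcases $d_\gamma>0$ (both sides equal $0$) and $d_\gamma<0$ (both sides equal $1$) against the definition of $\rho$ in \eqref{rho}; since $\det\gamma=1$ forces $d_\gamma\neq 0$, these are the only possibilities. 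This confirms the required identity and completes the proof.
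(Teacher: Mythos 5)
Your proof is correct and follows exactly the route the paper intends: its proof is the one-line instruction to combine Definition 1.1 with Theorem \ref{sa}(iii) and Corollary \ref{thec}(iii), which is precisely what you carry out. Your explicit check that $\rho(\gamma)=\tfrac12\big(1-\sgn(d_\gamma)\big)$ in the $c_\gamma=0$ case is the detail the paper leaves to the reader, and you verify it correctly.
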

\begin{proof}
Use Definition 1.1, Theorem \ref{sa} part iii) and  Corollary \ref{thec} part iii) to verify these statements.
\end{proof}

\begin{lemma}\label{lem}    The functions  $H:\G \to \R$ and $H^{*}:\G \to \R$ are left and right $\Gamma_{\infty}$-invariant.
\end{lemma}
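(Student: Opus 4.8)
The plan is to reduce left and right $\Gamma_\infty$-invariance to a check on generators. Since $\Gamma_\infty$ is generated by $P_\infty$ together with $-I$ when $-I\in\Gamma$, and since invariance under a generator $\delta$ automatically yields invariance under $\delta^{-1}$ (replace $\gamma$ by $\delta^{-1}\gamma$ in $H(\delta\gamma)=H(\gamma)$), the entire statement reduces to two verifications: invariance under $-I$ and invariance under $P_\infty$. The former is already in hand, since Lemma \ref{cor1} gives $H(-\gamma)=H(\gamma)$ and $H^*(-\gamma)=H^*(\gamma)$, and $-I$ being central this covers both sides at once. Thus the work is to establish $H(P_\infty\gamma)=H(\gamma)=H(\gamma P_\infty)$, and the same for $H^*$.

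First I would record the two evaluations $S(P_\infty)=A$ and $\theta(P_\infty)=-A$, which follow from the parabolic formulas \eqref{comp2} and Corollary \ref{thec}(iv) applied to $P_\infty$ (cusp $\infty$, scaling matrix $I$, so $h=1$ and $\delta(\infty,\infty)=1$). Next, because $\omega(P_\infty^{\,k},M)=\omega(M,P_\infty^{\,k})=0$ by \eqref{phs} and $\langle P_\infty,f\rangle=0$, the cocycle relation of Theorem \ref{sa}(i) and the additivity of Corollary \ref{thec}(vi) collapse to the clean shifts
\[
S(P_\infty\gamma)=S(\gamma P_\infty)=S(\gamma)+A, \qquad \theta(P_\infty\gamma)=\theta(\gamma P_\infty)=\theta(\gamma)-A.
\]

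Then I would treat the generic case $c_\gamma\neq 0$. Here the bottom-left entry is unchanged by left or right multiplication by $P_\infty$, while the trace term shifts: on the left $a_\gamma\mapsto a_\gamma+c_\gamma$ and on the right $d_\gamma\mapsto c_\gamma+d_\gamma$, so in either case $A(a_\gamma+d_\gamma)/c_\gamma$ increases by exactly $A$. For $H$ this $+A$ cancels the $-A$ coming from $-S$, and for $H^*$ it cancels the $-A$ coming from $\theta$, leaving both values unchanged. The remaining case is $c_\gamma=0$, where $\gamma\in\Gamma_\infty$; then $P_\infty^{\pm1}\gamma$ and $\gamma P_\infty^{\pm1}$ again lie in $\Gamma_\infty$, and substituting into the $c=0$ formulas (together with Lemma \ref{cor1}) shows that $H$ and $H^*$ vanish identically on $\Gamma_\infty$, so invariance is immediate there.

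The individual computations are entirely routine once this machinery is assembled; the one point deserving care is the exact matching of the two $\pm A$ contributions---the arithmetic shift in $S$ or $\theta$ produced by the cocycle, against the shift in the $A(a_\gamma+d_\gamma)/c_\gamma$ term produced by the change of entries. I would therefore verify that these cancel for both $H$ and $H^*$ and on both the left and the right before concluding.
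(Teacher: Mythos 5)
Your proof is correct and follows essentially the same route as the paper's: both rest on the cocycle relations of Theorem \ref{sa}(i) and Corollary \ref{thec}, the vanishing of the phase factors $\omega(P_\infty^k,\cdot)$ from \eqref{phs}, the evaluations $S(P_\infty)=A$ and $\theta(P_\infty)=-A$ from \eqref{comp2} and Corollary \ref{thec}(iv), and the cancellation of the resulting $\pm A$ shifts against the shift in the trace term $A(a+d)/c$, with Lemma \ref{cor1} disposing of $-I$. The only (cosmetic) differences are that you check the single generator $P_\infty$ and invoke the generator reduction where the paper computes directly with $P_\infty^k$ for all $k$, and that you handle $c_\gamma=0$ by noting $H$ and $H^*$ vanish identically on $\Gamma_\infty$ rather than by the paper's direct substitution into the $c=0$ formulas.
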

\begin{proof}
Recall that $\G_\ci$ is generated by $P_\ci$ and $-I$ if $-I \in \G$. If $-I \not\in \G$ then $\G_\ci$ is generated by $P_\ci$ alone. For  $\g$ in $\G$, parts i) and iv) of Theorem \ref{sa}, along with \e{comp2}, \e{phs}, show that
$$
S(P_\ci^k \g) = S(P_\ci^k)+S(\g)+\omega(P_\ci^k,\g) = kA+S(\g).
$$
Let $\gamma =\left(\smallmatrix a & b \\ c & d \endsmallmatrix \right)$. Then $P_\ci^k \g$ equals $\left(\smallmatrix a+kc & b+kd \\ c & d \endsmallmatrix \right)$. By definition, if $c\neq 0$,
\begin{align*}
 H(P_\ci^k \g) & = -S(P_\ci^k \g) + A\frac{a+kc+d}c-\frac{\sgn(c)}4 \\
 &  = -S(\g) + A\frac{a+d}c-\frac{\sgn(c)}4 = H(\g).
\end{align*}
If $c = 0$ then
\begin{align*}
 H(P_\ci^k \g)  & = -S(P_\ci^k \g) + A\frac{b+kd}d + \frac{\sgn(d)-1}4\\
  & = -S(\g) + A\frac{b}d + \frac{\sgn(d)-1}4= H(\g).
\end{align*}
Combining this with Lemma \ref{cor1} shows that $H$ is left-invariant under $\G_\ci$. The left-invariance of $H^*$ follows similarly
by employing
$$
\theta(P_\ci^k \g) = \theta(P_\ci^k)+\theta(\g) = -kA +\theta(\g)
$$
from parts i) and iv) of Corollary \ref{thec}. On the right, $\g P_\ci^k $ equals $\left(\smallmatrix a  & b+ka \\ c & d+kc \endsmallmatrix \right)$. By definition, if $c\neq0$,
\begin{align*}
H(\gamma P_\infty^k) &= -S(\gamma P_\infty^k) +A\frac{a+d+kc}{c} -\frac{\sgn(c)}{4}\\
&= -S(\gamma) +A\frac{a+d}{c} -\frac{\sgn(c)}{4} = H(\gamma).
\end{align*}
If $c=0$ then $a=d=\pm1$, as we have noted, and
\begin{align*}
H(\gamma P_\infty^k) &= -S(\gamma P^k_\infty) +A\frac{b+ka}{d} +\frac{\sgn(d)-1}{4}\\
& = -S(\gamma) +A\frac{b}{d} +\frac{\sgn(d)-1}{4}=H(\gamma).
\end{align*}
As previously, combining this with Lemma \ref{cor1} shows that $H$ is right-invariant under $\Gamma_\infty$. Similar calculations imply that $H^*$ is right-invariant as well.
\end{proof}

Analogous results to the above for  $H$ are contained in Theorem 2 of \cite{Bu}.

\begin{lemma}\label{lem-inv}   For all $\g \in \G$
\begin{equation}\label{inv}
    H( \g^{-1} ) = -H( \g) \quad \text{and} \quad    H^*( \g^{-1} ) = - H^*( \g).
\end{equation}
\end{lemma}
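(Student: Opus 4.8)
The plan is to deduce both identities from the relation $\gamma\gamma^{-1}=I$ by feeding it into the (near-)homomorphism properties of $S$ and $\theta$. First I would apply Theorem \ref{sa} i) to the pair $(\gamma,\gamma^{-1})$ and use $S(I)=0$ from part ii) to obtain $S(\gamma^{-1})=-S(\gamma)-\omega(\gamma,\gamma^{-1})$. In the same way, Corollary \ref{thec} i) together with $\theta(I)=0$ yields
\[
\theta(\gamma^{-1})=-\theta(\gamma)-\tfrac{V_f}{2\pi}\Im\big(\s{\gamma}{f}\,\overline{\s{\gamma^{-1}}{f}}\big).
\]
Here the additivity \eqref{modu} gives $\s{\gamma^{-1}}{f}=-\s{\gamma}{f}$, so the argument of $\Im$ is the negative of $|\s{\gamma}{f}|^2$, a real number; hence the last term vanishes and $\theta(\gamma^{-1})=-\theta(\gamma)$ holds for every $\gamma\in\G$.

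The only remaining input is the phase factor $\omega(\gamma,\gamma^{-1})$, which I would read off from \eqref{omg}. Since $c_{\gamma^{-1}}=-c_\gamma$ while $c_{\gamma\gamma^{-1}}=0$, the relevant sign triple is $(\sgn(c_\gamma),-\sgn(c_\gamma),0)$. When $c_\gamma\neq0$ this is $(1,-1,0)$ or $(-1,1,0)$, both of which land in the ``otherwise'' branch, so $\omega(\gamma,\gamma^{-1})=0$ and therefore $S(\gamma^{-1})=-S(\gamma)$. With this in hand the $c_\gamma\neq0$ case of both identities is a one-line substitution into Definition 1.1: the rational term obeys $A\tfrac{a+d}{c}\mapsto -A\tfrac{a+d}{c}$, the sign correction flips as $-\tfrac14\sgn(c_\gamma)\mapsto \tfrac14\sgn(c_\gamma)$, and $\theta(\gamma^{-1})=-\theta(\gamma)$ handles $H^*$, so each of $H(\gamma^{-1})$ and $H^*(\gamma^{-1})$ collapses to $-H(\gamma)$ and $-H^*(\gamma)$ respectively.

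The case $c_\gamma=0$ is where I expect the only genuine subtlety, and it is mild. Here $\gamma\in\G_\ci$, so $d_\gamma=a_\gamma=\pm1$, the triple is $(0,0,0)$, and \eqref{omg} now produces a nonzero phase $\omega(\gamma,\gamma^{-1})=(1-\sgn(d_\gamma))/2$ that enters $S(\gamma^{-1})$; for $H$ one must then check that this phase combines correctly with the flipped term $A\tfrac{b}{d}\mapsto -A\tfrac{b}{d}$ and the correction $\tfrac14(\sgn(d)-1)$ to recover $-H(\gamma)$. Rather than grind through this bookkeeping, I would invoke Lemma \ref{lem}: applying left $\G_\ci$-invariance to $\gamma=\gamma\cdot I$ gives $H(\gamma)=H(I)=0$, and likewise $H(\gamma^{-1})=0$ since $\gamma^{-1}\in\G_\ci$, so $H(\gamma^{-1})=-H(\gamma)$ trivially. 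For $H^*$ the $c_\gamma=0$ case needs no special treatment at all, as $\theta(\gamma^{-1})=-\theta(\gamma)$ and $A\tfrac{b}{d}\mapsto A\tfrac{-b}{a}=-A\tfrac{b}{d}$ (using $d=a$) already force $H^*(\gamma^{-1})=-H^*(\gamma)$. This disposes of every case.
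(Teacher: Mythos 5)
Your proposal is correct and follows essentially the same route as the paper: both feed $\gamma\gamma^{-1}=I$ into the cocycle relations of Theorem \ref{sa}~i) and Corollary \ref{thec}~i), evaluate the phase factor $\omega(\gamma,\gamma^{-1})$ (which the paper packages as $\rho(\gamma)$ via \eqref{rho}), and note that $\langle\gamma,f\rangle\overline{\langle\gamma^{-1},f\rangle}=-|\langle\gamma,f\rangle|^2$ is real so the modular-symbol term drops out of $\theta$. The only cosmetic difference is the degenerate case $c_\gamma=0$, which the paper settles by direct bookkeeping with $\rho(\gamma)$ and $a_\gamma=d_\gamma$, while you sidestep it by observing via Lemma \ref{lem} that $H$ vanishes identically on $\G_\ci$ -- both are fine.
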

\begin{proof}
This is again a straightforward computation, using that $\omega(\g,\g^{-1}) = \rho(\g)$ in the notation of \e{rho}. In fact, for $c_\g\neq0$,
$$
H(\gamma)+H(\gamma^{-1}) = \omega(\gamma,\gamma^{-1})=\rho(\gamma)=0.
$$
For $c_\g = 0$, we know $a_\g=d_\g$ and hence
$$
H(\gamma)+H(\gamma^{-1}) = \rho(\gamma) + \frac{\sgn(d_\g)-1}{4} + \frac{\sgn(a_\g)-1}{4}=0.
$$
The result for $H^*$ is shown similarly.
\end{proof}

%


\begin{lemma}\label{Lem}
Let us write the product $\gamma \tau$ of any two matrices $\gamma$ and $\tau$ from $\Gamma$ as
$$
\left(\begin{matrix} a_{\gamma\tau} & b_{\gamma\tau}  \\ c_{\gamma\tau} & d_{\gamma\tau}  \end{matrix} \right)=
\left(\begin{matrix} a_{\gamma} & b_{\gamma}  \\ c_{\gamma} & d_{\gamma}  \end{matrix} \right)
\left(\begin{matrix} a_{\tau} & b_{\tau}  \\ c_{\tau} & d_{\tau}  \end{matrix} \right).
$$
Then, assuming that $c_{\gamma}c_{\tau}c_{\gamma\tau} \neq 0$, we have
$$
\frac{a_\gamma+d_\gamma}{c_\gamma} + \frac{a_\tau +d_\tau}{c_\tau} - \frac{a_{\gamma\tau}+d_{\gamma\tau}}{c_{\gamma\tau}}  =
\frac{c_\gamma}{c_\tau c_{\gamma\tau}} + \frac{c_\tau}{c_\gamma c_{\gamma\tau}} +\frac{c_{\gamma\tau}}{c_\gamma c_\tau},
$$
and
$$
\frac{a_{\gamma}+d_{\gamma}}{c_{\gamma}}  - \frac{a_{\gamma\tau}+d_{\gamma\tau}}{c_{\gamma\tau}}  =
 \frac{c_{\tau}}{c_{\gamma}c_{\gamma\tau}} - \frac{d_{\gamma\tau}}{c_{\gamma\tau}} + \frac{d_{\gamma}}{c_{\gamma}}.
$$
\end{lemma}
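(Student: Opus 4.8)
The plan is to treat both identities as purely algebraic consequences of the two facts available for matrices in $\SL(2,\R)$: the multiplicativity relations giving the entries of $\g\tau$, namely
$$
c_{\g\tau}=c_\g a_\tau + d_\g c_\tau, \quad a_{\g\tau}=a_\g a_\tau + b_\g c_\tau, \quad d_{\g\tau}=c_\g b_\tau + d_\g d_\tau,
$$
together with the determinant conditions $a_\g d_\g - b_\g c_\g = 1$ and $a_\tau d_\tau - b_\tau c_\tau = 1$. In each case I would first clear all denominators, which is permissible since $c_\g c_\tau c_{\g\tau}\neq 0$, reducing the claim to a polynomial identity in the entries, and then substitute the product formulas and simplify using $\det\g=\det\tau=1$.

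For the second identity, multiplying through by $c_\g c_{\g\tau}$ and cancelling the terms $d_\g c_{\g\tau}$ and $d_{\g\tau} c_\g$ that appear on both sides, I expect the claim to collapse to the single relation
$$
a_\g c_{\g\tau} = c_\tau + a_{\g\tau} c_\g.
$$
Substituting $c_{\g\tau}=c_\g a_\tau + d_\g c_\tau$ and $a_{\g\tau}=a_\g a_\tau + b_\g c_\tau$, the terms $a_\g c_\g a_\tau$ cancel between the two sides and the remainder equals $c_\tau(a_\g d_\g - b_\g c_\g - 1) = 0$ by $\det\g = 1$. This disposes of the second identity in a few lines.

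For the first identity, after multiplying by $c_\g c_\tau c_{\g\tau}$ the assertion becomes
$$
(a_\g+d_\g)c_\tau c_{\g\tau} + (a_\tau+d_\tau)c_\g c_{\g\tau} - (a_{\g\tau}+d_{\g\tau})c_\g c_\tau = c_\g^2 + c_\tau^2 + c_{\g\tau}^2.
$$
I would substitute $c_{\g\tau}=c_\g a_\tau + d_\g c_\tau$ throughout the left-hand side and expand, grouping the result by the monomials $c_\g^2$, $c_\tau^2$, and $c_\g c_\tau$. I expect the terms carrying a factor $c_\g c_\tau$ to combine into $c_\g c_\tau(2a_\tau d_\g - b_\g c_\tau - b_\tau c_\g)$, while the remaining contributions are $c_\g^2(a_\tau^2 + a_\tau d_\tau)$ and $c_\tau^2(a_\g d_\g + d_\g^2)$. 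On the right-hand side I would expand $c_{\g\tau}^2 = c_\g^2 a_\tau^2 + 2c_\g c_\tau a_\tau d_\g + c_\tau^2 d_\g^2$. The monomials $c_\g^2 a_\tau^2$, $c_\tau^2 d_\g^2$, and $2c_\g c_\tau a_\tau d_\g$ then cancel between the two sides, and what remains is exactly
$$
c_\g^2(a_\tau d_\tau - b_\tau c_\tau) + c_\tau^2(a_\g d_\g - b_\g c_\g) = c_\g^2 + c_\tau^2,
$$
where the final equality is the two determinant conditions $\det\tau = 1$ and $\det\g = 1$. The only real obstacle here is bookkeeping: there is no conceptual subtlety, but one must track roughly a dozen monomials carefully and invoke $\det\g = \det\tau = 1$ at precisely the last step to produce the two squares on the right.
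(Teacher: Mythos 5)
Your computation is correct: clearing denominators, substituting the product formulas for $a_{\gamma\tau}$, $c_{\gamma\tau}$, $d_{\gamma\tau}$, and invoking $\det\gamma=\det\tau=1$ verifies both identities, and your bookkeeping of the monomials checks out. This is exactly the paper's approach, as its proof consists only of the remark that the claim follows by straightforward computation, so your proposal simply supplies the details the paper omits.
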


\begin{proof} The claim is proved by straightforward computation.
\end{proof}

\noindent
\textbf{Proof of the periodicity relations (Proposition 1.2).}
We already showed in Lemma \ref{lem} that $H$ and $H^*$ are left  $\Gamma_{\infty}$-invariant.

Assume that both $\gamma = \left(\smallmatrix a & b \\ c & d \endsmallmatrix \right)$
and $\eta = \left(\smallmatrix \alpha & \beta \\ c & d \endsmallmatrix \right)$ are in $\Gamma$.  Then
$$
\gamma \eta^{-1} = \left(\begin{matrix} ad-bc & \alpha b - a \beta \\ 0 & \alpha d - \beta c \end{matrix} \right)
= \left(\begin{matrix} 1 & \alpha b - a \beta \\ 0 & 1 \end{matrix}\right).
$$
This parabolic element fixes $\ci$ and necessarily equals $P_\ci^k$. Hence $\gamma = P_\ci^k \eta$ and by Lemma \ref{lem}, $H(\gamma)=H(\eta)$, and $H^*(\gamma)=H^*(\eta)$. This shows that $H(\gamma)$ and $H^{*}(\gamma)$ depend solely
on the lower row of $\gamma$, and therefore we may use the notation \eqref{notnx}.

We have $\gamma P_\infty = (\smallmatrix a&b+a\\ c& d+c\endsmallmatrix)$. Then by the right $\Gamma_\infty$-invariance of $H$, 
$$
H(d+c,c)=H(\gamma P_\infty) = H(\gamma) =  H(d,c).
$$
The same is true for $H^*$.

\vskip .10in
\noindent
\textbf{Proof of the three-term relations (Theorem \ref{tr-thm}, Corollary \ref{3term}).}
By Definition 1.1, one has that
\begin{equation*}
  H^*(\gamma\tau) - H^*(\gamma) - H^*(\tau) = \theta(\gamma\tau) -\theta(\gamma)-\theta(\tau) -A\left(\frac{a_\gamma+d_\gamma}{c_\gamma}+
  \frac{a_\tau+d_\tau}{c_\tau}-\frac{a_{\gamma\tau}+d_{\gamma\tau}}{c_{\gamma\tau}}\right).
\end{equation*}
Applying part i) of Corollary \ref{thec} and Lemma \ref{Lem} on the right gives \e{hhx2}. The analogous relation for $H$ in \e{hhx1} is proved in the same way, using part i) of Theorem \ref{sa} and \eqref{phs3}.
This  completes the proof of Theorem \ref{tr-thm}.

Let $\gamma_1=\gamma$, $\gamma_2=\tau$, and $\gamma_3=(\gamma\tau)^{-1}$ in Theorem \ref{tr-thm}. By Lemma \ref{lem-inv}, we have $H(\g_3)=-H(\g\tau)$ and $H^*(\g_3)=-H^*(\g\tau)$. Corollary \ref{3term} follows upon noting that $c_3=-c_{\g\tau}$.

\vskip .10in
\noindent
\textbf{Proof of the two-term relations (Theorem \ref{2trm}).}
With  Definition 1.1,
$$
H(\gamma)-H(\gamma\tau)  = -S(\g) +S(\g\tau)+A\left(\frac{a_\gamma+d_\gamma}{c_\gamma} -\frac{a_{\gamma\tau}+d_{\gamma\tau}}{c_{\gamma\tau}}\right) - \frac14 \sgn(c_\gamma ) + \frac14 \sgn( c_{\gamma\tau}).
$$
The relation \eqref{recip2a} is then established with Theorem \ref{sa} part i), \eqref{phs3},  Lemma \ref{Lem} and using the notation \eqref{notnx}.
Similarly,
$$
H^*(\gamma) - H^*(\gamma\tau) = \theta(\g ) - \theta(\g \tau) + A\left(\frac{a_\gamma+d_\gamma}{c_\gamma} -\frac{a_{\gamma\tau}+d_{\gamma\tau}}{c_{\gamma\tau}}\right)
$$
and then \eqref{recip3}  follows from Corollary \ref{thec} part i) and Lemma \ref{Lem}.

Suppose that $\tau$ is parabolic or elliptic. We have seen that this means $\s{\tau}{f}=0$.
Also, if $\tau$ does not fix a cusp equivalent to $\infty \bmod \G$, then $\theta(\tau)=0$ by Corollary \ref{thec} parts iv) and v).  Therefore \eqref{recip3} reduces to \eqref{recip2b} in this case.


\vskip .10in
\noindent
\textbf{Proof of the density result (Theorem 1.3).}
By Corollary \ref{iteration}, $\theta$ depends only on its values on a fixed set of generators for $\Gamma$ and their commutators.  More precisely,
$\theta(\gamma)$ is a $\tfrac12 \Z$-linear combination of this finite set of values. Recall that it is shown in \cite[Sect. 6.4]{JO'SS1} that
$\theta$ is rational on the set of generators and their commutators.  Let $L$ be the least common multiple of all the denominators.
Then, $2L\cdot \theta(\gamma) \in \Z$ for all $\gamma\in\Gamma$.

On the other hand, by the Gauss--Bonnet formula,
\begin{equation}\label{gbon}
  A = g-1 + \frac{n}{2} + \frac12\sum_{j=1}^\ell \left(1-\frac{1}{m_j}\right)
\end{equation}
where $g$ is the genus of $\Gamma$, $n$ is the number of inequivalent cusps and the numbers $m_j$ give the orders of the generating elliptic elements.
There can only be finitely many elliptic generators.  Let $L'$ be the least common multiple of $m_1,\dots, m_\ell$. It then follows
that $A\in \tfrac{1}{2L'}\Z$. Set $t$ to be the positive integer $4LL'$. Let $e(x):=e^{2\pi ix}$. Then for any $m,n\in\N$,
\begin{align*}
\sum_{c\leq X}\sum_{0\leq d<c} e(m\cdot \tfrac{d}{c} + n\cdot t H^*(d,c))  &=
\sum_{c\leq X} \sum_{0\leq d<c} e(m\cdot \tfrac{d}{c}+n\cdot tA \tfrac{a+d}{c}) \\
& = \sum_{c\leq X}\sum_{0\leq d<c} e(ntA \tfrac{a}{c} + (m+ntA)\tfrac{d}{c}).
\end{align*}
By the preceding discussion, both $M:=ntA$ and $N:=m+M$ are positive integers.  Therefore, the series
$$
S(M,N,c) = \sum_{0\leq d<c} e(\tfrac{Ma+Nd}{c})
$$
defines a Selberg--Kloosterman sum.
Let $\delta_0=1$ if $M=N=0$ and zero otherwise.  Theorem 4 in \cite{Go} establishes the existence of a real number $\kappa>0$ such that
$$
\sum_{c\leq X} S(M,N,c) = \delta_0 \frac{X^2}{\pi V_\Gamma} + O(X^{2-\kappa})
\,\,\,\,\,\text{\rm as $X \rightarrow \infty$.}
$$
Theorem 1.3 then follows from the classical Weyl criterion; see, for example, \cite{KN74}.

\section{Further transformation relations}

The two-term relations in Theorem \ref{2trm} yield additional arithmetic properties of the generalized Dedekind sums $H$ and $H^*$,
some of which we now state.

\begin{cor}\label{cor2}
Assume that $w:=\left(\begin{smallmatrix} 0&-1/\lambda\\ \lambda&0\end{smallmatrix}\right)\in\Gamma$ for $\lambda>0$. Let $\gamma =
\left(\begin{smallmatrix} *&*\\c&d\end{smallmatrix}\right)\in\Gamma$ be such that $c d\neq 0$.  Then
\begin{align}\label{der1}
  H(d,c) - H(-c/\lambda,d \lambda) & = A\left(\frac{d}{c}+\frac{c}{d \lambda^2} +\frac{1}{cd}\right) -\frac14 \sgn(cd), \\
  H^*(d,c) - H^*(-c/\lambda,d \lambda) & = A\left(\frac{d}{c}+\frac{c}{d \lambda^2} +\frac{1}{cd}\right). \label{der2}
\end{align}
Moreover, if $\iota(\Gamma)=\Gamma$ then $-H(-c/\lambda,d \lambda)=H(c/\lambda,d \lambda)$ and $-H^*(-c/\lambda,d \lambda)=H^*(c/\lambda,d \lambda)$.
\end{cor}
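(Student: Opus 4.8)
The plan is to specialize the two-term relations of Theorem \ref{2trm} to $\tau=w$. First I would record the basic features of $w=\left(\begin{smallmatrix}0&-1/\lambda\\\lambda&0\end{smallmatrix}\right)$: since $\tr(w)=0$ it is elliptic, it fixes the interior point $z_0=i/\lambda\in\H$, and $c_w=\lambda>0$. Multiplying out gives $\gamma w=\left(\begin{smallmatrix}b\lambda&-a/\lambda\\ d\lambda&-c/\lambda\end{smallmatrix}\right)$, so $c_{\gamma w}=d\lambda$ and $d_{\gamma w}=-c/\lambda$; in the notation of \eqref{notnx} this reads $H(\gamma w)=H(-c/\lambda,d\lambda)$, and likewise for $H^*$. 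Because $cd\neq0$ and $\lambda>0$, the nonvanishing hypothesis $c_\gamma c_\tau c_{\gamma\tau}=cd\lambda^2\neq0$ of Theorem \ref{2trm} is met, and the main term common to \eqref{recip2a} and \eqref{recip3} evaluates to $A\bigl(\tfrac{d_\gamma}{c_\gamma}-\tfrac{d_{\gamma\tau}}{c_{\gamma\tau}}+\tfrac{c_\tau}{c_\gamma c_{\gamma\tau}}\bigr)=A\bigl(\tfrac{d}{c}+\tfrac{c}{d\lambda^2}+\tfrac1{cd}\bigr)$, which is precisely the right-hand side in both \eqref{der1} and \eqref{der2}.

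For the $H$-identity \eqref{der1} I would evaluate the two extra terms in \eqref{recip2a}. As $w$ is elliptic fixing $z_0=i/\lambda$, Theorem \ref{sa}(v) gives $S(w)=-\tfrac1{2\pi i}\log j(w,z_0)=-\tfrac1{2\pi i}\log i=-\tfrac14$, using $j(w,z_0)=\lambda\cdot(i/\lambda)=i$ and the principal-branch value $\log i=i\pi/2$. The sign term is $\tfrac14\bigl(\sgn(c_w)-\sgn(c_\gamma c_w c_{\gamma w})\bigr)=\tfrac14\bigl(1-\sgn(cd\lambda^2)\bigr)=\tfrac14\bigl(1-\sgn(cd)\bigr)$ since $\lambda>0$. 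Adding $S(w)=-\tfrac14$ cancels the constant and leaves $-\tfrac14\sgn(cd)$, giving \eqref{der1}. For the $H^*$-identity \eqref{der2}, the point is that $w$ is elliptic, so $\theta(w)=0$ by Corollary \ref{thec}(v) and $\langle w,f\rangle=0$ because modular symbols vanish on elliptic elements; hence the last two terms of \eqref{recip3} drop out. Equivalently, since $w$ fixes an interior point it cannot fix a cusp equivalent to $\infty$, so \eqref{recip3} collapses directly to \eqref{recip2b}, which is \eqref{der2}.

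For the ``moreover'' statement the key simplification is that $w^2=-I$, so $-I=w^2\in\Gamma$ automatically and Lemma \ref{cor1} is available; the rest is bookkeeping with $\iota$. Applying $\iota$ to $\gamma w$ yields $\iota(\gamma w)=\left(\begin{smallmatrix}b\lambda&a/\lambda\\-d\lambda&-c/\lambda\end{smallmatrix}\right)$, with bottom row $(-d\lambda,-c/\lambda)$, so that $-\iota(\gamma w)=\iota(\gamma)w$ has bottom row $(d\lambda,c/\lambda)$ and hence $H(-\iota(\gamma w))=H(c/\lambda,d\lambda)$. Chaining Lemma \ref{cor1}, in the form $H(-\iota(\gamma w))=H(\iota(\gamma w))$, with Lemma \ref{sym}, in the form $H(\iota(\gamma w))=-H(\gamma w)=-H(-c/\lambda,d\lambda)$, produces $H(c/\lambda,d\lambda)=-H(-c/\lambda,d\lambda)$; the identical argument with the $H^*$-versions of Lemmas \ref{cor1} and \ref{sym} gives the statement for $H^*$.

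The main obstacle is not any genuine difficulty but the bookkeeping of conventions: one must keep straight that $H(d,c)$ records the bottom-right entry first and the bottom-left entry second, and correctly propagate the two sign operations (the automorphism $\iota$ and multiplication by $-I$) through the bottom rows in the ``moreover'' part. The one substantive computational input is the evaluation $S(w)=-1/4$, which relies on the elliptic formula of Theorem \ref{sa}(v) together with the correct branch of the logarithm.
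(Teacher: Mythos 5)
Your proposal is correct and follows essentially the same route as the paper: both specialize Theorem \ref{2trm} together with \eqref{recip2b} to $\tau = w$, and both handle the ``moreover'' claim by combining Lemmas \ref{cor1} and \ref{sym}, noting $-I = w^2 \in \Gamma$. The only deviation is a sub-step: you evaluate $S(w) = -\tfrac14$ via the elliptic fixed-point formula of Theorem \ref{sa} part v) at $z_0 = i/\lambda$, whereas the paper deduces it from $S(w^2) = S(-I) = -\tfrac12$ together with $S(w^2) = 2S(w) + \omega(w,w) = 2S(w)$; both are valid one-line computations.
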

\begin{proof}
The relation \e{recip2b} for $\tau = w$ implies \eqref{der2} directly.  Also \e{recip2a} implies
\begin{equation*}
  H(d,c) -H(-c/\lambda,d\lambda)  = A\left(\frac{d}{c}+\frac{c}{d \lambda^2} +\frac{1}{cd}\right) +S(w)+\frac14 \big(1-\sgn(cd) \big).
\end{equation*}
 Since  $S(w^2)=S(-I)=-1/2$, and $S(w^2)=2S(w)+\omega(w,w)=2S(w)$,  we can conclude that $S(w)=-1/4$.
For the last statement,  Lemmas \ref{cor1} and \ref{sym} combine to show
$H(\gamma) = -H(\iota(\gamma)) = - H(-\iota(\gamma))$. Note that $-I = w^2 \in \G$.
The same properties hold for $H^*$.
\end{proof}

When zero is a cusp for the group $\G$ there is a particularly elegant form of Theorem \ref{2trm}.

\begin{cor} \label{rtra}
Suppose that $P_{0} := \left(\smallmatrix 1 & 0 \\ s & 1 \endsmallmatrix \right) \in \G$ for  $s >0$, and assume that the cusps $0$ and $\infty$
are not $\G$ equivalent. Then for $k \in \Z$ and $\gamma =
\left(\begin{smallmatrix} *&*\\c&d\end{smallmatrix}\right)\in\Gamma$ with $c(c+ksd) \neq 0$,
 the identity
\begin{equation}\label{hid}
  H^*(d,c)- H^*(d,c+ksd)= A\frac{ks(d^2+1)}{c(c+ksd)}
\end{equation}
holds. Furthermore, if $c(c+ksd) > 0$, then
\begin{equation}\label{hid2}
  H(d,c)- H(d,c+ksd)= A\frac{ks(d^2+1)}{c(c+ksd)}.
\end{equation}
\end{cor}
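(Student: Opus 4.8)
The plan is to apply the two-term relations of Theorem \ref{2trm} with the specialized choice $\tau = P_0^k$. First I would dispose of the trivial case $k=0$, in which $\tau = I$ and both sides of \eqref{hid} and \eqref{hid2} vanish, and thereafter assume $k\neq 0$. Since $P_0$ is parabolic one has $P_0^k = (\smallmatrix 1 & 0 \\ ks & 1 \endsmallmatrix)$, and computing the product gives
$$
\gamma P_0^k = \left(\begin{matrix} a+ksb & b \\ c+ksd & d \end{matrix}\right),
$$
so that $c_{\g\tau}=c+ksd$ and $d_{\g\tau}=d$, while $c_\tau = ks$. Because $s>0$ and $k\neq 0$ we have $c_\tau\neq 0$, and the standing hypothesis $c(c+ksd)\neq 0$ gives $c_\g c_{\g\tau}\neq 0$; hence the requirement $c_\g c_\tau c_{\g\tau}\neq 0$ of Theorem \ref{2trm} is met.

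Next I would record the relevant structural facts about $\tau = P_0^k$: it is parabolic, fixing the cusp $0$, which by assumption is not $\G$-equivalent to $\infty$. This is precisely the situation in which \eqref{recip2b} holds, so \eqref{recip3} collapses to
$$
H^*(d,c) - H^*(d,c+ksd) = A\left(\frac{d}{c} - \frac{d}{c+ksd} + \frac{ks}{c(c+ksd)}\right).
$$
A short algebraic simplification, combining the three fractions over the common denominator $c(c+ksd)$, reduces the bracket to $ks(d^2+1)/\bigl(c(c+ksd)\bigr)$ and thus yields \eqref{hid}.

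For the sharper statement \eqref{hid2} about $H$ I would instead invoke \eqref{recip2a}, which differs from the display above by the extra term $S(\tau) + \tfrac14\bigl(\sgn(c_\tau) - \sgn(c_\g c_\tau c_{\g\tau})\bigr)$. The two summands must be shown to vanish. For the first, conjugating by a scaling matrix $\sigma_0$ for the cusp $0$ produces $\tau_0 = \sigma_0^{-1}\tau\sigma_0 = \pm(\smallmatrix 1 & h \\ 0 & 1 \endsmallmatrix)$; since $\tr(\tau)=2$ the sign is $+$, so the clean evaluation \eqref{comp2} applies, and together with $\delta(\infty,0)=0$ it forces $S(\tau)=0$. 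For the second, the extra hypothesis $c(c+ksd)>0$ gives $\sgn(c_\g c_{\g\tau})=1$, whence $\sgn(c_\g c_\tau c_{\g\tau}) = \sgn(c_\tau)$ and the sign correction cancels. Substituting into \eqref{recip2a} reproduces the same right-hand side as before, establishing \eqref{hid2}.

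All the computations are routine; the one point demanding care is the vanishing of $S(\tau)$, where I expect the main (if minor) obstacle to lie. One must verify that the conjugate $\tau_0$ falls in the $+$ branch of $\pm(\smallmatrix 1 & h \\ 0 & 1 \endsmallmatrix)$ so that the simplified formula \eqref{comp2} is legitimate, rather than the full \eqref{comp} with its leftover phase factors $\omega(\sigma_0^{-1},\tau)$ and $\omega(\tau_0,\sigma_0^{-1})$; the trace argument is what guarantees this.
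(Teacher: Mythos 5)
Your proposal is correct and follows essentially the same route as the paper: dispose of $k=0$, take $\tau=P_0^k$ in \eqref{recip2b} to get \eqref{hid}, and then in \eqref{recip2a} kill the extra terms by showing $S(P_0^k)=0$ via Theorem \ref{sa} part iv) with \eqref{comp2} (using $\delta(\infty,0)=0$) and by noting the sign term vanishes when $c(c+ksd)>0$. Your added care about the $+$ sign in $\sigma_0^{-1}P_0^k\sigma_0=\pm\left(\begin{smallmatrix}1&h\\0&1\end{smallmatrix}\right)$, settled by the trace, is a detail the paper leaves implicit but is exactly the right justification for invoking \eqref{comp2}.
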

\begin{proof}
Both \eqref{hid} and \eqref{hid2} are true for $k=0$ so we may assume $k \neq 0$. The first identity \eqref{hid} follows by taking $\tau=P_0^k$ in \eqref{recip2b}. By \eqref{recip2a},
$$
H(d,c)- H(d,c+ksd)= A\frac{ks(d^2+1)}{c(c+ksd)}+S(P_0^k) +\frac 14 \sgn(k)\big( 1-\sgn(c(c+ksd))\big).
$$
With Theorem \ref{sa} part iv) and \eqref{comp2} we have $S(P_0^k)=0$ and the second identity \eqref{hid2}  follows.
\end{proof}

Numerous further relations and identities for $H$ and $H^*$ are possible. For example, suppose $\iota(\G)=\G$ and
$\gamma =
\left(\begin{smallmatrix} a& *\\c&d\end{smallmatrix}\right)\in\Gamma$ with $c \neq 0$. Then by Lemmas \ref{sym} and \ref{lem-inv} one finds that $H(d,c)= H(a,c)$ and $H^*(d,c)= H^*(a,c)$.

\section{Examples}

In this section we present  transformation laws for the generalized Dedekind sums $H$ and $H^*$ associated to  Hecke triangle groups,
 Hecke congruence groups and  Helling/moonshine type groups. We include the underlying group  in the notation for clarity.

\subsection{Hecke triangle groups}
Let $\lambda_q:=2\cos(\pi/q)$. The Hecke triangle group $G_q$ is generated by
$\left(\begin{smallmatrix} 0 & -1  \\ 1 & 0  \end{smallmatrix} \right)$ and $\left(\begin{smallmatrix} 1 & \lambda_q  \\ 0 & 1  \end{smallmatrix} \right)$
for integers $q\geq 3$.
For each group $G_{q}$, the corresponding Riemann surface has genus zero with one cusp, two elliptic points and  volume  $\pi(q-2)/q$. Except for $q=3, 4, 6$ and
$\infty$, $G_{q}$ is non-arithmetic.

We conjugate $G_q$  into our desired form to obtain
$$
\hat{G_q}:= \sigma^{-1} G_q \sigma = \left\langle \left(\begin{matrix} 0 & -1/\lambda_q  \\ \lambda_q & 0  \end{matrix}\right), \left(\begin{matrix}
1 & 1  \\ 0 & 1  \end{matrix} \right) \right\rangle
\quad \text{for} \quad \sigma = \left(\begin{matrix} \lambda_q^{1/2} & 0  \\ 0 & \lambda_q^{-1/2}  \end{matrix} \right).
$$
The generalized Dedekind sums  $H^*$ do not exist for these groups as there are no weight two cusp forms. For $H_{\hat{G_q}}$ associated to $\hat{G_q}$,
Corollary \ref{cor2} implies that, with any $
\left(\begin{smallmatrix} * & *\\c&d\end{smallmatrix}\right)\in \hat{G_q}$  and $cd>0$,
\begin{equation}\label{rec}
  H_{\hat{G_q}}(d,c) + H_{\hat{G_q}}(c/\lambda_q,d \lambda_q)  = \frac{q-2}{4q}\left(\frac{d}{c}+\frac{c}{d \lambda_q^2} +\frac{1}{cd}\right) -\frac14.
\end{equation}
This is equivalent to the reciprocity from \cite{Br}, p. 12. We used that $\iota(\hat{G_q}) = \hat{G_q}$ since $\iota$ maps the generators to their inverses and satisfies $\iota(\g \tau)=\iota(\g)\iota(\tau)$.

When $q=3$ we have $\lambda_3=1$ and $G_3=\hat{G_3}=\text{\rm SL}(2, \mathbb{Z})$. Then \eqref{rec} becomes
$$
H_{\text{\rm SL}(2, \mathbb{Z})}(c,d) + H_{\text{\rm SL}(2, \mathbb{Z})}(d,c) = \frac{1}{12}\left(\frac{d}{c} + \frac{c}{d} +\frac{1}{cd}\right) - \frac{1}{4},
$$
which is the reciprocity law for classical Dedekind sums.

\subsection{Hecke congruence groups} \label{sec:Hecke congruence}
Define $\Gamma_0(N)$ as the subgroup of $\SL(2,\Z)$ with bottom left matrix elements divisible by the level $N$. In this subsection we take $\Gamma$ to be one of these Hecke congruence groups, with subscript $N$ in our notation indicating the level.

With the work of Vassileva \cite{Va}, it can be seen that $H_N$ may be expressed with a finite sum, as in the classical case.
Define the products $\alpha_N:=\prod_{p\mid N} 1/(1-p^{-1})$ and  $\beta_N:=\prod_{p\mid N}(1-p^{-2})/(1-p^{-1})$ where $p$ is prime,
and let $\mu$ denote the M\"obius function. In \cite[Thm. 4.1.3, Prop. 4.1.1]{Va} it is shown that
for
$\g= \left(\smallmatrix a & b \\ c & d \endsmallmatrix \right) \in \Gamma_0(N)$,
\begin{equation} \label{vas}
 S_N(\g)=
 \beta_N\frac{N(a+d)}{12c}   - \frac{c}{4|c|} - \alpha_N \frac{c}{|c|} \sum_{v\mid N}
 \frac{\mu(v)}{v}s\left( d, \frac{v|c|}{N}\right) \quad  \text{  if } \quad c\neq 0
\end{equation}
and
\begin{equation*}
  S_N(\g)= \beta_N \frac{Nb}{12d} + \frac{d/|d|-1}{4} \quad  \text{  if } \quad  c= 0.
\end{equation*}
Since $N\beta_N$ is the index of the group $\Gamma_0(N)$ in $\mathrm{SL}(2,\Z)$, one has $A_N=N\beta_N/12$. Hence
\begin{equation}\label{hcongruence}
H_N(d,c)=\alpha_N \frac{c}{|c|} \sum_{v\mid N} \frac{\mu(v)}{v}s\left( d, \frac{v|c|}{N}\right) \quad  \text{  if } \quad c\neq 0
\end{equation}
and $H_N(d,c)=0$ if $c=0$.

For each parabolic generator of $\Gamma_0(N)$ we obtain a periodicity relation for $H_N$. The generator $P_\ci$ gives the
usual periodicity $H_N(d+c,c)=H_N(d,c)$ from Proposition \ref{period}. This also follows from \e{hcongruence} and \e{perio}.
Next, if we assume that $N\geq 2$, then the cusps $0$ and $\infty$ are not $\Gamma_0(N)$ equivalent. Let $c$, $d$ and $k$ be positive
integers with $c$ and $d$ relatively prime and $N | c$. Corollary \ref{rtra} may be applied with $s=N$ to obtain the periodicity
\begin{equation}\label{hid2q}
  H_N(d,c)- H_N(d,c+kNd)= A_N\frac{kN(d^2+1)}{c(c+kNd)}.
\end{equation}
 Inserting \eqref{hcongruence} into \e{hid2q} then yields
 the summation formula
\begin{equation}\label{rty}
\sum_{v\mid N} \frac{\mu(v)}{v}\left( s\left( d, \frac{v c}{N}\right) - s\left( d, \frac{v c}{N} + v k d\right) \right) =
\frac{N^2 k(d^2+1)}{12c(c+kNd)}  \prod_{p\mid N} \left(1-p^{-2} \right).
\end{equation}
We are unaware of this identity involving the classical Dedekind sum \eqref{dede} and the M\"obius function
appearing elsewhere in the literature. However, a direct proof of \e{rty} may be given by applying the reciprocity law \e{recip} and then the periodicity relation \e{perio} to the second Dedekind sum. With another application of reciprocity the Dedekind sums disappear, and we are left with an identity that follows from the properties of $\mu$.

The groups $\Gamma_0(N)$ are genus zero  for finitely many $N$. Outside these levels, $H_N^*$ may be constructed as there are weight two cusp forms.
It is not known if $H_N^*$ may be expressed as a finite sum, similar to \e{hcongruence}. Indirect evidence that such sums exist is given by the fact that the
values of $H_N^*$ are rational in all the cases examined so far. In particular, it follows from Theorem 4 of \cite{JO'SS1} that $H_N^*$ is always
rational for $N\in \{11,14,15,17,19,20,21,24,27,32,36,49\}$. These are the levels where $\Gamma_0(N)$ has genus one and so $H_N^*$ is independent of the
choice of nonzero cusp form.

\subsection{The group $\Gamma_{0}(11)$} \label{gam11}
Let us focus on the Hecke congruence group of level $N=11$, the simplest case where both $H_N=H_{11}$ and $H_N^*=H_{11}^*$ are not trivial.
The group $\G=\G_0(11)$ is generated by the parabolic and hyperbolic elements
\begin{equation}\label{g11}
  P_\ci := \left(\begin{matrix} 1 & 1 \\ 0 & 1\end{matrix}\right)
,\quad
P_0 := \left(\begin{matrix} 1 & 0 \\ 11 & 1\end{matrix}\right),\quad
Q :=\left(\begin{matrix} -7 & -1 \\ 22 & 3\end{matrix}\right),
\quad
R :=\left(\begin{matrix} 4 & 1 \\ -33 & -8\end{matrix}\right),
\end{equation}
along with $-I$, and they satisfy the relation $QRQ^{-1}R^{-1}P_0^{-1}P_\ci=I$. Thus, $H_{11}$ and $H_{11}^*$ have the periodicity relations corresponding to $P_\ci$, $P_0$ from Proposition \ref{period} and Corollary \ref{rtra}. There are also hyperbolic periodicity relations corresponding to $Q$ and $R$, from Theorem \ref{2trm}, which we describe next. In the notation of Section \ref{sec:Hecke congruence} above, $A_{11}=11\beta_{11}/12=1$.

By the computations in \cite[Sect. 7.1]{JO'SS1}, with $A$ and $B$ replaced notationally by $Q$ and $R$, we have that
\begin{equation*}
  S_{11}(P_\ci)=1, \quad S_{11}(P_0)=0, \quad S_{11}(Q)=-\frac 25, \quad S_{11}(R)=\frac 25,
\end{equation*}
and, by extending that work slightly, the evaluations
\begin{equation}\label{valss}
  \theta_{11}(P_\ci)=-1, \quad \theta_{11}(P_0)=0, \quad \theta_{11}(Q)=\frac 3{10}, \quad \theta_{11}(R)=-\frac 3{10}.
\end{equation}
We may take $f(z)$ to be $\eta(z)^2\eta(11z)^2$. If $\g= \left(\smallmatrix * & * \\ c & d \endsmallmatrix \right) \in \G$ then
\begin{equation*}
  \s{\g}{f} = - \s{\g^{-1}}{f} = -2\pi i \int_{\ci}^{\g^{-1}\ci} f(w)\, dw = 2\pi i \int^{\ci}_{-d/c} f(w)\, dw.
\end{equation*}
For all $c$, $d \in \Z$ that are relatively prime, with $11|c$ and $c(22d-7c)\neq 0$, the hyperbolic periodicity relations associated to $Q$ are
\begin{equation*}
  H_{11}(d,c)+H_{11}(c-3d,-7c+22d)  = \left(\frac dc +\frac{c-3d}{22d-7c}+\frac{22}{c(22d-7c)}\right) -\frac 25 + \frac 14(1-\sgn(c(22d-7c))),
\end{equation*}
and
\begin{multline} \label{mods}
  H_{11}^*(d,c)+H_{11}^*(c-3d,-7c+22d)  = \left(\frac dc +\frac{c-3d}{22d-7c}+\frac{22}{c(22d-7c)}\right)-\frac 3{10} \\
  -\frac{1}{2\|f\|^2} \Im\left(\int^{\ci}_{-d/c} f(w)\, dw \cdot \overline{\int^{\ci}_{-3/22} f(w)\, dw } \right).
\end{multline}
The relations for $R$ are similar, and we may also give the transforms corresponding to the inverses of $Q$ and $R$.

Using the above transformations, $H_{11}(d,c)$ and $H_{11}^*(d,c)$  can be computed, for any pair $d$ and $c$ corresponding to $\g= \left(\smallmatrix * & * \\ c & d \endsmallmatrix \right) \in \Gamma_0(11)$ as follows.  Write $\g$ as a word in the generators \e{g11}. At each step apply the  transform corresponding to the inverse of
the rightmost generator appearing in the word. This eventually reduces $\g$ to $I$ or $-I$. For $H_{11}^*$ this procedure will involve the modular symbols in \e{mods}.

A more efficient method to evaluate $H_{11}^*(\g)$ is to apply Corollary \ref{iteration} to the word representing $\g$. This gives $\theta_{11}(\g)$ in terms of $\theta_{11}$ evaluated on generators and commutators of pairs of generators. It  follows easily from Lemma \ref{prop-theta} that
$$
\theta_{11}([\gamma,\tau]) = \theta_{11}(\g\tau)-\theta_{11}(\tau\g)
$$
and so the problem reduces further. It is shown in \cite[Sect. 7.1]{JO'SS1}
how to compute $\theta_{11}$ of products of pairs of generators. Using the values \e{valss} then completes the calculation.

\subsection{The groups $\Gamma_{0}(N)^{+}$}
Assume that $N$ is a square-free positive integer with $r$ factors. In this subsection we let $\G$ be the Helling or moonshine type group of level $N$, which is
\begin{equation*}
  \Gamma_0(N)^+ := \left\{\frac 1{\sqrt{e}}\m{a}{b}{c}{d} \in
    \SL(2,\R):\ a,b,c,d,e\in\Z, \ e\mid N,\ e\mid a,
    \ e\mid d,\ N\mid c \right\}.
\end{equation*}
Then this $\G$ has only one cusp, at $\infty$, with $\G_\ci$ generated by $P_\ci$ and $-I$. The constant $A=A_{N,+}$ in this case
is $\sigma(N)/(12\cdot 2^{r})$ where $\sigma(N)$ indicates the sum of all divisors of $N$.  See \cite{JST16} and its included references  for
further information on $ \Gamma_0(N)^+$.

The generalized Dedekind sum $H_{N,+}$ associated to $\Gamma_{0}(N)^{+}$  satisfies the periodicity relation of Proposition \ref{period}. Moreover, if $p$ divides $N$ then
$\left(\smallmatrix 0 & -1/\sqrt{p} \\ \sqrt{p} & 0 \endsmallmatrix \right) \in \Gamma_{0}(N)^{+}$.  Let $
\left(\begin{smallmatrix} *&*\\c&d\end{smallmatrix}\right)\in\Gamma_{0}(N)^{+}$ with $cd>0$.  By Corollary \ref{cor2} we
obtain the  reciprocity relation
\begin{equation}\label{hh0}
  H_{N,+}(d,c) + H_{N,+}(c/\sqrt{p},d\sqrt{p}) = \frac{\sigma(N)}{12\cdot 2^{r}}\left(\frac{d}{c} + \frac{c}{pd} + \frac{1}{cd}\right) - \frac{1}{4}.
\end{equation}

Further elliptic elements will result in additional reciprocity laws.  For example, when $N=37$, there are three more elliptic elements of $\Gamma_0(37)^+$:
$$
\epsilon_{1}:= \left(\begin{matrix} 6 & -1 \\ 37 & -6\end{matrix}\right)
,\quad
\epsilon_{2}:=\frac{1}{\sqrt{37}}\left(\begin{matrix} 37 & -19 \\ 74 & -37\end{matrix}\right)
\quad \text{\rm and}\quad
\epsilon_{3}:=\left(\begin{matrix} 11 & -3 \\ 37 & -10\end{matrix}\right).
$$
The first two are of order four and the third one is of order six (see \cite[Sect. 7.2]{JO'SS1}).
Computing with Theorem \ref{sa} part v) yields that $S_{37,+}(\epsilon_{1})=S_{37,+}(\epsilon_{2})=-1/4$ and $S_{37,+}(\epsilon_{3})=-1/6$.
Then, for $c,d \in\Z$ or $c,d\in\Z\sqrt{37}$, $cd>0$ such that $
\left(\begin{smallmatrix} *&*\\c&d\end{smallmatrix}\right)\in\Gamma_0(37)^+$, applying Theorem \ref{2trm} with $\tau=\epsilon_1$, $\epsilon_2$ and $\epsilon_3$ (using Lemmas \ref{cor1}, \ref{sym} to adjust the signs) gives the following formulas:
\begin{align}
  H_{37,+}(d,c) + H_{37,+}(c + 6d,6c + 37d) & = \frac{19}{12}\left(\frac{d}{c} + \frac{c+6d}{6c+37d} + \frac{37}{c(6c+37d)}\right) - \frac{1}{4}, \label{hh1} \\
  H_{37,+}(d,c) + H_{37,+}((19c + 37d)/\sqrt{37},\sqrt{37}(c + 2d)) & = \frac{19}{12}\left(\frac{d}{c} +
\frac{19c+37d}{37(c + 2d)} + \frac{2}{c(c + 2d)}\right)- \frac{1}{4}, \label{hh2} \\
H_{37,+}(d,c) + H_{37,+}(3c + 10d,11c + 37d) & = \frac{19}{12}\left(\frac{d}{c} + \frac{3c + 10d}{11c + 37d} + \frac{37}{c(11c + 37d)}\right)-\frac{1}{6}. \label{hh3}
\end{align}

Recall that  $\Gamma_{0}(37)^+$ has genus one, and hence the  Dedekind sum  $H_{37,+}^{*}$ exists for this group.  It is uniquely defined as $\dim S_2(\G)=1$. Using \e{recip2b}, one obtains reciprocity formulas
for $H_{37,+}^{*}$ that are almost the same as those for $H_{37,+}$ above;  replace $H_{37,+}$ by $H_{37,+}^{*}$ in \eqref{hh0}--\eqref{hh3} and remove the fractions $1/4$ and $1/6$ on the right.

The group $\Gamma_{0}^+(37)$ is generated by $P_\infty$, the elliptic elements $\left(\smallmatrix 0 & -1/\sqrt{37} \\ \sqrt{37} & 0 \endsmallmatrix \right)$, $\epsilon_1$, $\epsilon_2$, $\epsilon_3$ and the hyperbolic elements
$$
L:= \frac{1}{\sqrt{37}}\left(\begin{matrix} 148 & -89 \\ 185 & -111\end{matrix}\right)
,\quad
M:=\left(\begin{matrix} 20 & -13 \\ 37 & -24\end{matrix}\right).
$$
Hyperbolic periodicity relations associated to $L$, as an example, can be easily computed by combining Theorem \ref{2trm} with computations from \cite[Sect. 7.2]{JO'SS1}.

The compactified quotient $X_{0}(37)^{+}:=\overline{\G_0(37)^+}\setminus \H$ is isomorphic to the genus one curve $y^2 =4x^3-4x+1$ over $\C$,
where $\overline{\G_0(37)^+}$ is the projection of $\Gamma_{0}(37^+)$ onto $\mathrm{PSL}(2,\R)$. Hence, we can take $f(z)$ to be the weight
two cusp form on $\Gamma_0(37)^+$ related to the isomorphism between $\G\setminus \H \cup \{\ci\}$ and the algebraic curve $y^2 =4x^3-4x+1$
sending $\ci$ to $0$ in such a way that the pull-back of the canonical differential $dx/dy$ is $-2\pi i f(z)dz$. The Petersson norm of $f$
is then $||f||=\omega_1\omega_2/(4\pi^2 i)$, where $\omega_1$ and $\omega_2$ are the real and complex periods of the curve $y^2 =4x^3-4x+1$.

By the computations in \cite[Sect. 7.2]{JO'SS1}, one has that $\theta_{37,+}(L)= -\frac{19}{24}$. Hence, for all $\left(\begin{smallmatrix} *&*\\c&d\end{smallmatrix}\right)\in\Gamma_0(37)^+$ such that $c(4c+5d)\neq0$ we find
\begin{multline*}
 H_{37,+}^*(d,c)+H_{37,+}^*((89c+111d)/\sqrt{37},(4c+5d)\sqrt{37})  = \frac{19}{12}\left(\frac dc +\frac{89c+111d}{37(4c+5d)}+\frac{5}{c(4c+5d)}\right)  +\frac {19}{24}\\
 -\frac{19}{24\|f\|^2} \Im\left(\int^{\ci}_{-d/c} f(w)\, dw \cdot \overline{\int^{\ci}_{3/5} f(w)\, dw } \right).
\end{multline*}
A similar expression can be easily deduced for the sum $H_{37,+}$.

{\small

}

\vskip 5mm

{\footnotesize
\textsc{Department of Mathematics, Rutgers University, Piscataway, NJ 08854, USA}

\textit{E-mail address:} \texttt{claire.burrin@rutgers.edu}

\vskip 3mm

\textsc{Department of Mathematics,
 The City College of New York,
 New York, NY 10031, USA
 }

\textit{E-mail address:} \texttt{jjorgenson@mindspring.com}

\vskip 3mm

\textsc{Department of Mathematics, The CUNY Graduate Center, New
   York, NY 10016, USA}

\textit{E-mail address:} \texttt{cosullivan@gc.cuny.edu}

\vskip 3mm

\textsc{Department of Mathematics, University of Sarajevo, 71 000 Sarajevo, Bosnia and Herzegovina}

\textit{E-mail address:} \texttt{lejlas@pmf.unsa.ba}
}
\end{document}